\documentclass[12pt]{elsarticle}

\usepackage[margin=1in]{geometry}  
\usepackage{graphicx}              
\usepackage{amsmath}               
\usepackage{amsfonts}              
\usepackage{amssymb}
\usepackage{amsthm}                
\usepackage{mathtools}		
\usepackage{enumitem}		
\usepackage{hyperref}

\newtheorem{thm}{Theorem}[section]
\newtheorem{lem}[thm]{Lemma}

\newtheorem{cor}[thm]{Corollary}

\theoremstyle{definition}
\newtheorem{defi}[thm]{Definition}

\newcommand{\xb}{\mathbf{x}}

\newcommand{\cdb}{\mathcal{C}}

\begin{document}

\nocite{*}

\title{On families of bivariate copulas and their interrelation with the Hilbert space $\ell_2$ and the 
Hilbert cube $\mathcal{H}$}

\author[JFS]{Juan Fern\'andez S\'anchez}
\ead{jfsjufesa@gmail.com}
\author[WT]{Wolfgang Trutschnig\corref{cor1}}
\ead{wolfgang@trutschnig.net}

\address[JFS]{Research Group of Theory of Copulas and Applications, University of Almería, 04120 Almería, Spain}
\address[WT]{Department for Artificial Intelligence and Human Interfaces, University of Salzburg, 5020 Salzburg, Austria}

\cortext[cor1]{Corresponding author}

\begin{abstract}
The Markov kernel based metric $D_1$ was introduced in 2011 in order to construct the 
scale-invariant dependence measure $\zeta_1$, which assign each bivariate copula $C$ 
a dependence value in $[0,1]$, with $0$ exclusively for the case of independence, and $1$ 
exclusively for complete/functional dependence. In the original paper 
it has been shown that the resulting metric space $(\cdb,D_1)$ is separable and 
complete, however, no further topological properties 
were studied. Considering that $D_1$ has proved useful in a variety of contexts, using tools 
from infinite-dimensional topology, we here
close this gap, show that $(\cdb,D_1)$ is homeomorphic to the Hilbert space 
$(\ell_2,\Vert \cdot \Vert_2)$, and prove that several subfamilies are
either homeomorphic to $(\ell_2,\Vert \cdot \Vert_2)$ or to the Hilbert cube 
$(\mathcal{H},\rho)$. Moreover, allowing for a better assessment of relative sizes, we 
show that various subfamilies are so-called $Z$-sets in $(\cdb,D_1)$, implying that 
they are topologically negligible in the full space. 
\end{abstract}

\begin{keyword}
copula \sep Markov kernel \sep Hilbert space \sep Hilbert cube
\MSC[2020] 62H05 \sep 57N20 
\end{keyword}

\maketitle
\section{Introduction}
Constituting the link between multivariate distribution functions and their univariate margi\-nals, 
copulas have become a key tool for modeling scale-invariant dependence of 
random variables over the past decades. Apart from an amazingly broad application spectrum, copulas 
are also analytically/topologically handy objects with nice properties - they are, e.g., 
Lipschitz continuous functions and pointwise convergence coincides with uniform 
convergence, which, in turn, 
is equivalent to weak convergence of the corresponding doubly stochastic measures (see \cite{DuSe,Nel}).    
 
Driven by the need for flexible dependence models, various subfamilies of copulas have been studied
over the years, including in particular the family of Extreme-Value copulas and the family of Archimedean copulas. Working with a subclass $\mathcal{S}$ of the 
family $\cdb$ of all bivariate co\-pu\-las, the question naturally arises, how `large' 
$\mathcal{S}$ is in $\cdb$. Looking only at cardinality is certainly insufficient, the same applies 
to calculating the diameter etc. - three more promising alternatives are the following ones: \\
\textbf{(i)} Study the `size' in terms of measures of association/concordance and 
compare the range obtained 
by $\mathcal{S}$ to the one of the full family $\cdb$. Looking, for instance, at the two most famous
concordance measures Kendall's $\tau$ and Spearman's $\rho$, one could compare 
the set $\{(\tau(C),\rho(C)):\, C \in \mathcal{S} \}$ with the full region 
$\Omega:=\{(\tau(C),\rho(C)):\, C \in \cdb \}$ determined in  \cite{SPT}. 
Using this interpretation and considering Fig. 4 in \cite{SPT}, the family of Archimedean copulas
would be much `larger' than the family of Extreme Value copulas. Changing the measures of 
association, however, might result in a totally different answer, not to mention the fact 
that determining these regions usually is a non-trivial endeavor, see, e.g., \cite{KS,KM,MT}
and the references therein.\\
\textbf{(ii)} Settle for a binary classification, work with Baire categories and 
label sets as small (referred to as meager or of first category) and large 
(co-meager sets, i.e., complements of meager sets). Using this machinery one can, for instance, 
show that the family of all singular copulas is co-meager in $(\cdb,d_\infty)$ 
(see \cite{DFST.singular}), or that the family of strict Archimedean copulas is co-meager in 
the family $(\cdb_{arch},d_\infty)$ of all Archimedean copulas (see \cite{DFST.baire.symm}).  \\
\textbf{(iii)} Use tools from infinite-dimensional topology 
(see, e.g., \cite{Torun,vanMill1989}), check if a 
family $\mathcal{S} \subseteq \cdb$ is homeomorphic to the Hilbert cube 
$\mathcal{H}=[-1,1]^\mathbb{N}$ or to the Hilbert space $\ell_2$ (in which case it can be 
considered large from an absolute perspective), and compare the relative size of subsets 
in terms of so-called $Z$-sets - topologically 
negligible and hence intuitively (very) small closed sets. 
If a metric space $(\Omega,m)$ is homeomorphic
to the Hilbert cube, then it automatically has the Fixed Point property and various other 
important features (see Section \ref{sec2}); if it is homeomorphic to $\ell_2$, then it even 
is nowhere locally compact, so intuitively it might be considered very large.  
As demonstrated recently in \cite{LiuYang2023},
using this approach it can be shown, e.g., that $(\cdb,d_\infty)$ as well as 
the family of exchangeable/symmetric copulas $\cdb^{ex}$ are homeomorphic to the Hilbert 
cube $\mathcal{H}$, but the latter is a $Z$-set in $(\cdb,d_\infty)$.  

We here follow the third approach but, contrary to \cite{DFST2026}, focus 
on the Markov kernel based metric $D_1$, mentioned in \cite{LiuYang2023}, introduced 
in \cite{p06} and extended to the multivariate setting in \cite{D1multi,GJT}. 
In a nutshell, $D_1$ was introduced in \cite{p06} in order to construct a dependence measure 
$\zeta_1$ assigning each copula a dependence value in $[0,1]$, with $\zeta_1(C)=0$ 
exclusively for the independence/product copula $\Pi$, and $\zeta_1(C)=1$ if and only if 
$C$ is completely dependent (in the sense that for $(X,Y)$ having copula $C$, the random 
variable $Y$ is a function of $X$, see Section \ref{sec2}). 
According to \cite{p06}, the resulting metric space $(\cdb,D_1)$ is complete and separable, contrary 
to $(\cdb,d_\infty)$ it is, however, not compact. Much more is true: as a by-product of our first 
two main theorems we conclude that both, $(\cdb,D_1)$ as well as the family $\cdb^{mcd}$ 
of all mutually completely dependent copulas (i.e., completely dependent copulas $C$ whose 
transpose $C^t$ is completely dependent too), are nowhere locally compact.\\
 
Our main results can be summarized as follows: $(\cdb,D_1)$ is homeomorphic to 
the separable Hilbert space $(\ell_2,\Vert \cdot \Vert_2)$ of all square summable sequences 
and the same is true for $(\cdb^{mcd},D_1)$. Moreover, the family $(\cdb^{ev},D_1)$ of all 
Extreme Value copulas as well as the 
family $(\cdb^{sh},D_1)$ of all spatially homogeneous copulas are homeomorphic to the Hilbert cube.
All three families $\cdb^{mcd},\cdb^{ev},\cdb^{sh}$ are at the same time $Z$-sets (and hence very small) in the metric space $(\cdb,D_1)$.\\

The remainder of this note is organized as follows: Section \ref{sec2} collects notations 
and preliminaries on copulas and on infinite-dimensional topology. 
Section \ref{sec:results} contains the result that $(\cdb,D_1)$ is homeomorphic 
to $\ell_2$, Section \ref{sec:4} the afore-mention properties of families related with 
univariate functions. Some examples illustrate the constructions used in the proofs.        
 
\section{Notation and preliminaries}\label{sec2}
For every metric space $(\Omega,m)$ and every $r \geq 0$ we will let $B(x,r)$ and 
$\overline{B}(x,r)$ denote the open and the closed ball of radius $r$ around $x$, respectively.  
The Borel $\sigma$-field on $\Omega$ will be denoted by $\mathcal{B}(\Omega)$, the Dirac measure
at $a \in \Omega$ by $\delta_a$, the indicator function of a set $A \subseteq \Omega$ by 
$\mathbf{1}_A$. For measure spaces $(\Omega_1,\mathcal{A}_1,\mu_1), (\Omega_2,\mathcal{A}_2,\mu_2)$
and a measurable mapping $f: \Omega_1 \to \Omega_2$ we will let 
$\mu_1^f$ denote the push-forward of $\mu_1$ via $f$, i.e., $\mu_1^f(E)=\mu_1(f^{-1}(E))$ for 
every $E \in \mathcal{A}_2$. 

In what follows, $\mathcal{H}=[-1,1]^\mathbb{N}$ denotes the Hilbert cube, 
we will denote sequences and vectors by bold symbols and write, e.g., 
$\mathbf{x}=(x_1,x_2,\ldots) \in \mathcal{H}$. Defining $\rho: \mathcal{H} \times \mathcal{H} \to [0,1]$  
by $\rho(\mathbf{x},\mathbf{y})=\sum_{i=1}^\infty \frac{1}{2^i}\vert x_i-y_i \vert $ 
yields a metrization of coordinate-wise convergence, the resulting metric space   
 $(\mathcal{H},\rho)$ is convex and compact.  
The Hilbert cube $(\mathcal{H},\rho)$ is commonly viewed as `prototype' compact metric space, 
since every compact metric space $(\Omega,m)$ is homeomorphic to a compact subset 
of $(\mathcal{H},\rho)$; more generally, a metric space $(\Omega,m)$ is separable if and only if 
it can be embedded in $(\mathcal{H},\rho)$, see, e.g., \cite{vanMill1989}. \\
In the sequel $(\ell_2,\Vert \cdot \Vert_2)$ denotes the separable Hilbert space of all 
square-summable sequences $\mathbf{x} \in \mathbb{R}^\mathbb{N}$.
According to \cite{Anderson}, $(\ell_2,\Vert \cdot \Vert_2)$ is homeomorphic to 
$\mathbb{R}^\mathbb{N}$ endowed with the metric $\sigma$, given by
$$
\sigma(\mathbf{x},\mathbf{y}) := \sum_{i=1}^\infty 2^{-i} \frac{\vert x_n-y_n \vert}{1+\vert x_n-y_n },
\quad \mathbf{x},\mathbf{y} \in \mathbb{R}^\mathbb{N}.
$$
It is well-known and straightforward to verify that $\sigma$ is a metrization of 
the product topology, convergence w.r.t. $\sigma$ is equivalent to 
coordinate-wise convergence.

For the sake of completeness we here recall some main notions
and some results from infinite-dimensional topology used in the sequel, for more background we 
refer to the textbooks \cite{vanMill1989} and \cite{Sakai2020}.   

\begin{defi}[Retract]
Let $(\Omega,m)$ be a metric space and $A$ a closed subset of $\Omega$. Then $A$ is called a
\emph{retract} if there exists a continuous mapping $r: \Omega \rightarrow A$ that has 
each $x \in A$ as fixed point. Such a mapping $r$ is called a \emph{retraction}. \\
Moreever, $(\Omega,m)$ is an \emph{absolute retract} (AR, for short), if it is a
retract of every metric space containing $\Omega$ as a closed subset.
\end{defi}
\noindent It is well known (and not hard to verify) 
that every retract of an AR is an AR (see, e.g., \cite{Hu1965}). Moreover, according to \cite{Du}
every convex subset $E$ of a normed space $(N,\Vert \cdot \Vert)$ is an AR.

\begin{defi}[SDLFA property]
A separable metric space $(\Omega,m)$ has the strong discrete locally finite approximation property (SDLFA property, for short) if for every function $\varepsilon \colon \Omega \to (0,\infty)$ there exists a 
map $f \colon \Omega \times \mathbb{N} \longrightarrow \Omega$ 
satisfying  
\begin{equation}\label{ineq:approx}
m\big(f(x,n), x\big) < \varepsilon(x) 
\end{equation}
for every $(x,n) \in \Omega \times \mathbb{N}$ and fulfilling that the family
\[
\big\{\, f(\Omega \times \{n\}) : n \in \mathbb{N} \,\big\}
\]
is locally finite in $\Omega$, i.e., every $x \in \Omega$ belongs to only finitely many of the 
sets $f(\Omega \times \{n\})$.
\end{defi}

\begin{thm}[Toru\'nczyk characterization for $\ell_2$]\label{thm:torun.l2}
A complete and separable metric space $(\Omega,m)$ is homeomorphic to the Hilbert space 
$(\ell_2,\Vert \cdot \Vert_2)$ if and only if 
it is an AR and has the strong discrete locally finite approximation property.
\end{thm}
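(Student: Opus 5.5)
Theorem \ref{thm:torun.l2} is Toruńczyk's deep characterization theorem. The paper does not prove it but quotes it from \cite{Torun} (see also \cite{vanMill1989,Sakai2020}), so here I only sketch the standard route. The work splits into necessity and sufficiency.

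\textbf{Necessity.} Suppose $(\Omega,m)$ is homeomorphic to $\ell_2$. Both properties in the statement are topological invariants, so I check them on $\ell_2$ itself, or on the homeomorphic model $(\mathbb{R}^\mathbb{N},\sigma)$ from \cite{Anderson}. Since $\ell_2$ is a convex subset of a normed space, it is an AR by \cite{Du}.

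For the SDLFA property I use the model $\mathbb{R}^\mathbb{N}$. Given $\varepsilon:\Omega\to(0,\infty)$, I would shift far out in one coordinate. For $x=(x_1,x_2,\dots)$ let $k(x)$ be the smallest index with $2^{-k(x)}<\varepsilon(x)$. To keep $f$ continuous, instead of a hard cutoff I let the coordinate used vary continuously through a partition of unity. I then set the value of coordinate $k(x)$ (blended between neighbouring coordinates) to $n$. This changes the $\sigma$-distance by less than $\varepsilon(x)$.

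The images $f(\Omega\times\{n\})$ have some coordinate of size about $n$ at an index that is locally bounded. Any fixed point therefore has a neighbourhood meeting only finitely many of these sets, which gives local finiteness.

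\textbf{Sufficiency.} This is the hard direction, and I would follow Toruńczyk's argument in outline. First, an AR is a retract of a convex set $E$ in a normed space. Combined with the SDLFA property, this lets one show that $\Omega\times\ell_2\cong\Omega$. The idea is to use the discrete approximations to absorb an $\ell_2$ factor, via a Z-set unknotting and absorption argument. Second, Toruńczyk's theorem that an AR $X$ satisfies $X\times\ell_2\cong\ell_2$ (an $\ell_2$-manifold factor result plus contractibility) then finishes the proof.

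\textbf{Main obstacle.} The key technical step is upgrading discrete approximations into closed embeddings whose images are Z-sets in a coherent sequence. One then builds the homeomorphism by a Bing-type shrinking, or an infinite composition of small homeomorphisms, that converges thanks to completeness of $m$. This convergence argument is the genuine obstacle, and I would cite \cite{Torun} rather than reproduce it.
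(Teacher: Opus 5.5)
The paper gives no proof of this theorem; it quotes it from Toru\'nczyk. So citing \cite{Torun} for sufficiency is exactly what the paper does, and your outline of that direction follows the usual strategy. The one part you actually argue, that $(\mathbb{R}^\mathbb{N},\sigma)$ has the SDLFA property, has a genuine gap.

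Blending a single coordinate does not produce a coordinate of size about $n$. Where your partition of unity puts weight $w\in(0,1)$ on coordinate $j$, the natural blended value $(1-w)x_j+wn$ can be any real number, because $x_j$ is unconstrained. Concretely, suppose the weights (that is, $\varepsilon$) do not depend on the two blended coordinates $j<j'$, and at $y$ they equal $w$ and $1-w$ with $0<w<1$. Take $x$ with $x_j=(y_j-wn)/(1-w)$, $x_{j'}=(y_{j'}-(1-w)n)/w$ and $x_i=y_i$ otherwise. Then $f(x,n)=y$ for every $n$, so even point-finiteness fails. The underlying confusion is that local finiteness concerns image points $f(x,n)$ near a fixed $y$, and these may come from points $x$ far from $y$. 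Local boundedness of $k(x)$ in the variable $x$ says nothing about them.

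The repair is to overwrite a whole tail of coordinates and blend only at the transition. Choose a continuous $\psi\geq 1$ with $2^{1-\psi}\leq\varepsilon$, put $\beta_j(x)=\min\{1,\max\{0,j-\psi(x)\}\}$, and set $f(x,n)_j=(1-\beta_j(x))x_j+\beta_j(x)\,n$. Then $f$ is continuous and $\sigma(f(x,n),x)<2^{-\lfloor\psi(x)\rfloor}\leq\varepsilon(x)$. To check local finiteness, fix $y$ and pick $r>0$ and $K$ with $\psi\leq K$ on $B(y,r)$. Let $V=\{z:|z_j-y_j|<\delta \text{ for } j\leq M\}$ with $M\geq K+2$ and $2^{1-M}+\delta<r$. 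If $f(x,n)\in V$, there are two cases:
\begin{itemize}
\item If $\psi(x)\leq M-1$, then $f(x,n)_M=n$, so $|n-y_M|<\delta$.
\item Otherwise $x$ agrees with $f(x,n)$ on the first $M-1$ coordinates. Then $\sigma(x,y)<r$, hence $\psi(x)\leq K<M-1$, a contradiction.
\end{itemize}
So $V$ meets only finitely many of the sets $f(\Omega\times\{n\})$.

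This argument needs $\varepsilon$ continuous (your partition of unity needs that too) and uses genuine local finiteness. The paper's literal definition allows an arbitrary $\varepsilon$ and glosses local finiteness as point-finiteness, and in that form necessity is false. For example, let $\varepsilon=1/q$ at the $q$-th point of a dense sequence and $\varepsilon=1$ elsewhere; then continuity forces $f(\cdot,n)=\mathrm{id}$ for every $n$. You should therefore state explicitly that you are proving Toru\'nczyk's version, with continuous $\varepsilon$ (equivalently, open covers). That is also the version in which SDLFA is a topological invariant, which your transfer to $\mathbb{R}^\mathbb{N}$ relies on.
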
 

\begin{defi}[$Z$-set]
A subset $A$ of $(\Omega,m)$ is a called a $Z$-set if it is closed and for every
continuous mapping $\varepsilon: \Omega \rightarrow (0,1)$ there exists some continuous function 
$f: \Omega \rightarrow \Omega\setminus A$ such that $m(x,f(x)) < \varepsilon(x)$ holds for all
$x \in \Omega$. 
\end{defi}
In the case $(\Omega,m)$ is compact, it obviously suffices to consider $\varepsilon$ constant.
We interpret $Z$-sets as topologically negligible (and hence topologically very small) sets. 
Obviously $Z$-sets are nowhere dense (hence of first Baire category), but not vice versa, so 
being a $Z$-set is a strictly stronger condition than being of first category. 

\begin{defi}[Fixed Point Property]
We say that a metric space $(\Omega,m)$ has the Fixed Point property (FP property, for short) if every continuous mapping $f: \Omega \to \Omega$
has at least one fixed point.
\end{defi}

\begin{defi}[Disjoint Cells Property]
We say that a metric space $(\Omega,m)$ has the \emph{disjoint-cells property} (DC property, for short) 
if for every $n \in \mathbb{N}$, every continuous
function $f: [0,1]^n \times \{0,1\} \to \Omega$, and every $\varepsilon > 0$, there exists a 
continuous function $g: [0,1]^n \times \{0,1\} \to \Omega$, such that
$$
\sup\{m(f(\xb),g(\xb)):\, \xb \in [0,1]^n \times \{0,1\} \} < \varepsilon \quad \text{and} \quad 
g([0,1]^n \times \{0\}) \cap g([0,1]^n \times \{1\}) = \emptyset.
$$
\end{defi}

\begin{thm}[Toru\'nczyk characterization of $\mathcal{H}$]\label{thm:torun}
A metric space $(\Omega,m)$ is homeomorphic to $\mathcal{H}$ if and only if it is compact,
an $AR$, and has the DC property.
\end{thm}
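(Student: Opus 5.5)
The plan is to treat the two directions very differently, since the statement is Toruńczyk's theorem and its converse direction is one of the deep results of infinite-dimensional topology.

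\textbf{Necessity.} I would check the three properties directly on $(\mathcal{H},\rho)$ and transport them along a homeomorphism, since all three are topological.
\begin{itemize}[leftmargin=*]
\item Compactness is Tychonoff's theorem, or can be read off from $\rho$ by a diagonal argument.
\item For the AR property, I would embed $\mathcal{H}$ linearly into the normed space $\ell_1$ via $\mathbf{x}\mapsto (2^{-i}x_i)_i$. This map is an isometry from $(\mathcal{H},\rho)$ onto a convex subset. The result of \cite{Du} quoted above then gives that it is an AR.
\item For the DC property, take $f=(f_1,f_2,\dots)$ and $\varepsilon>0$, and choose $N$ with $2^{-N}<\varepsilon/2$. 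Define $g$ to agree with $f$ in the coordinates $i\neq N$. In coordinate $N$, put $g_N:=\tfrac{1}{2}$ on $[0,1]^n\times\{0\}$ and $g_N:=-\tfrac12$ on $[0,1]^n\times\{1\}$. Then $\rho(f,g)\le 2\cdot 2^{-N}<\varepsilon$, and the two images are separated by the $N$-th coordinate.
\end{itemize}

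\textbf{Sufficiency.} Let $(\Omega,m)$ be a compact AR with the DC property. I would follow Toruńczyk's strategy in three steps.
\begin{itemize}[leftmargin=*]
\item \emph{Reduction to a product.} Invoke Edwards' theorem that $\Omega\times\mathcal{H}\cong\mathcal{H}$ for every compact AR $\Omega$. This is itself proved via cell-like resolutions, and I would take it from the literature, e.g.\ \cite{vanMill1989}. It then suffices to show that the projection $\pi:\Omega\times\mathcal{H}\to\Omega$ is a near-homeomorphism, meaning a uniform limit of homeomorphisms. Compactness of $\Omega$ then identifies $\Omega$ with $\Omega\times\mathcal{H}\cong\mathcal{H}$.
\item \emph{Near-homeomorphism via Bing shrinking.} I would verify the criterion of the Bing shrinking theorem for $\pi$. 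For every $\varepsilon>0$ one needs a homeomorphism $h$ of $\Omega\times\mathcal{H}$ such that $\pi\circ h$ is $\varepsilon$-close to $\pi$ and every fiber $h^{-1}(\pi^{-1}(x))$ has diameter $<\varepsilon$.
\item \emph{From DC to $Z$-embeddings.} The shrinking homeomorphisms will come from the following intermediate result: in a compact AR with the DC property, every map $[0,1]^n\to\Omega$, and then every map $\mathcal{H}\to\Omega$, can be uniformly approximated by $Z$-embeddings. To prove it, I would first use DC to push apart images of disjoint cells, iterating over a countable base of pairs of disjoint closed cells and using completeness of the space of maps. This yields approximation by embeddings whose images are $Z$-sets; the AR property supplies the extensions needed at each stage. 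Feeding these $Z$-embeddings into the $Z$-set unknotting theorem in $\Omega\times\mathcal{H}\cong\mathcal{H}$ produces the homeomorphisms $h$ required for the shrinking criterion.
\end{itemize}

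The main obstacle is this last step: upgrading the finite-dimensional DC condition to approximation of maps $\mathcal{H}\to\Omega$ by $Z$-embeddings with uniform control, so that the shrinking criterion applies. My first attempt would be an inverse-limit or convergent-sequence construction in the complete space $C(\mathcal{H},\Omega)$. At stage $k$ I would perturb only the first $k$ coordinates by less than $\varepsilon 2^{-k}$ to achieve disjointness for the $k$-th pair of cells. The open-dense (Baire) character of each condition would then guarantee that the limit map is a $Z$-embedding.
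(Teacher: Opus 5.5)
The paper gives no proof of this statement. It is Toru\'nczyk's theorem, quoted from the literature, and the paper even uses its \emph{only if} direction (together with Keller's theorem) to conclude that $\mathcal{H}$ is an AR with the DC property.

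\textbf{Necessity.} Your argument here is correct and independent of that circularity. The map $\mathbf{x}\mapsto(2^{-i}x_i)_i$ is an isometry of $(\mathcal{H},\rho)$ onto the convex set $\prod_i[-2^{-i},2^{-i}]\subseteq\ell_1$, so Dugundji's theorem applies. Changing only the $N$-th coordinate gives DC with $\sup\rho(f,g)\le 2^{1-N}<\varepsilon$. One small addition: transporting DC, which is stated with a fixed metric and constant $\varepsilon$, along a homeomorphism uses uniform continuity in both directions. That holds by compactness.

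\textbf{Sufficiency.} This part is a roadmap rather than a proof, and the decisive gap is not the one you flag. The reductions via Edwards' theorem and Bing's shrinking criterion are fine. Approximating maps into $\Omega$ by $Z$-embeddings is plausible, though it needs repair as written:
\begin{itemize}
\item $\Omega$ has no coordinates, so ``perturb the first $k$ coordinates'' has to become: approximately factor a map $\mathcal{H}\to\Omega$ through a cube $[-1,1]^k$, apply DC there, and extend small perturbations using the AR property.
\item The Baire argument only yields embeddings. That the image is a $Z$-set is a separate $G_\delta$ condition, which you never check.
\end{itemize}

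The real problem is the claim that $Z$-set unknotting in $\Omega\times\mathcal{H}$ then produces the shrinking homeomorphisms. Even in its controlled form, unknotting takes a homeomorphism between $Z$-sets $A,B$ that is $\pi$-controlled homotopic to the inclusion. It extends it to a homeomorphism $h$ which, off $A$, is only known to be close to the identity in the $\pi$-controlled sense. That gives $\pi\circ h\approx\pi$, but no bound at all on the diameters of the images of the fibres under $h$ or $h^{-1}$. For $A=\emptyset$ the identity is such an extension. Moreover, the fibres $\{x\}\times\mathcal{H}$ fill the whole space while $A$ is nowhere dense.

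What your $Z$-embeddings actually give is an \emph{embedding} with both Bing properties. If $e\colon\Omega\times\mathcal{H}\to\Omega$ is a $Z$-embedding $\delta$-close to $\pi$, then $\theta(x,q):=(e(x,q),q_0)$ satisfies $\pi\circ\theta\approx\pi$ and sends each fibre into a set of diameter at most $2\delta$. But $\theta$ has a $Z$-set as image, so it is not a homeomorphism. Turning such data into homeomorphisms with the same control is exactly the content of Toru\'nczyk's theorem that cell-like maps of $\mathcal{H}$ onto compact ARs with DC are near-homeomorphisms. Your sketch assumes this at precisely that point.

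So for sufficiency you must either cite Toru\'nczyk, as the paper does, or actually carry out that argument.
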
 
For convex subsets of normed spaces, the following equivalence (in which $\operatorname{ind}$ 
denotes the small inductive dimension) holds:
\begin{thm}[Keller--Dobrowolski--Toru\'nczyk]\label{thm:keller}
Let $E$ be a convex subset of a normed space $(N,\Vert \cdot \Vert)$. Then $E$ is homeomorphic to
$\mathcal{H}$ if and only if $E$ is compact and $\operatorname{ind}E=\infty$. 
\end{thm}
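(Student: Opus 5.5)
The statement is the classical theorem of Keller, in the form extended to normed spaces by Dobrowolski and Toruńczyk. I would derive it from Theorem \ref{thm:torun}, using Dugundji's result quoted above that convex subsets of normed spaces are ARs.

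\textbf{Necessity.} If $E$ is homeomorphic to $\mathcal{H}$, then $E$ is compact. Moreover $E$ contains a copy of $[0,1]^n$ for every $n$, namely a face of $\mathcal{H}$, and small inductive dimension is monotone on separable metric subspaces. Hence $\operatorname{ind}E\geq n$ for every $n$, so $\operatorname{ind}E=\infty$.

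\textbf{Sufficiency, first reduction.} Let $E$ be compact and convex with $\operatorname{ind}E=\infty$. Then $E$ is an AR, so by Theorem \ref{thm:torun} it only remains to verify the DC property. The first step is to translate the dimension hypothesis into a linear-algebraic one. If the affine hull of $E$ were finite-dimensional, say of dimension $k$, then $E$ would be a compact convex subset of $\mathbb{R}^k$, and so $\operatorname{ind}E\leq k$. Therefore the affine hull is infinite-dimensional, and for every $m$ there are affinely independent points $e_0,\ldots,e_m\in E$.

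\textbf{Sufficiency, DC property.} Fix $n$, a continuous $f:[0,1]^n\times\{0,1\}\to E$, and $\varepsilon>0$. I would proceed in three steps.
\begin{enumerate}
\item \emph{Finite-dimensional approximation.} Since $E$ is compact, choose a finite $\varepsilon/3$-net $\{p_1,\ldots,p_N\}\subseteq E$. Take a partition of unity $(\varphi_j)$ on $E$ subordinate to the balls $B(p_j,\varepsilon/3)$ and set $\tilde f=\sum_j(\varphi_j\circ f)\,p_j$. By convexity $\tilde f$ maps into the polytope $P=\operatorname{conv}\{p_1,\ldots,p_N\}\subseteq E$, and $\Vert \tilde f - f\Vert_\infty<\varepsilon/3$.
\item \emph{Enlargement and shrinking.} Add further affinely independent points of $E$ so that the resulting polytope $K\subseteq E$ contains $P$ and has dimension $m\geq 2n+1$. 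Let $c$ be a relative interior point of $K$ and put $\tilde f_t=(1-t)\tilde f+tc$ for small $t>0$. Then $\tilde f_t$ maps into the relative interior of $K$ and is still within $\varepsilon/3$ of $\tilde f$.
\item \emph{General position.} Inside the $m$-dimensional affine space spanned by $K$, approximate $\tilde f_t$ on each of the two cubes by a PL map. Then perturb the vertices of a fine triangulation into general position, keeping them in the relative interior of $K$, to obtain $g$. Two polyhedra of dimension at most $n$ in general position in an $m$-dimensional space with $m\geq 2n+1$ are disjoint. Hence $g([0,1]^n\times\{0\})\cap g([0,1]^n\times\{1\})=\emptyset$, and $g$ is $\varepsilon$-close to $f$.
\end{enumerate}
This yields the DC property, and Theorem \ref{thm:torun} then gives $E\cong\mathcal{H}$.

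\textbf{Main obstacle.} I expect the delicate point to be the general-position step. The perturbed simplicial map must remain inside $K$, which is why the preliminary shrink toward the interior point $c$ is used. One must also check that the vertex perturbations can be chosen simultaneously small and generic, which is standard finite-dimensional PL topology. The remaining steps are routine once the affine hull of $E$ is known to be infinite-dimensional.
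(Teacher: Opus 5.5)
The paper does not prove this statement. It quotes it from the literature as a black box (Keller's theorem, extended by Klee to locally convex spaces and by Dobrowolski--Toru\'nczyk beyond local convexity), alongside Theorem~\ref{thm:torun} and Dugundji's theorem.

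Your derivation is therefore a genuinely different route, and it is correct. Necessity is fine, the contrapositive showing that $\operatorname{ind}E=\infty$ forces an infinite-dimensional affine hull is right, and the DC verification works. The one step you should spell out rather than call standard is disjointness. Impose general position on the \emph{combined} vertex images of both triangulations in $\operatorname{aff}K\cong\mathbb{R}^m$. Then for a simplex $\sigma$ of the first cube and $\tau$ of the second, the at most $2n+2\le m+1$ vertex images are affinely independent, so $g(\sigma)$ and $g(\tau)$ are disjoint faces of one simplex. Closeness follows from $\sup\Vert g-h\Vert\le\max_v\Vert g(v)-h(v)\Vert$ for the PL map $h$. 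Convexity of $K$ keeps $g$ in $K\subseteq E$ once the perturbed vertices lie in $K$, which your shrink toward $c$ guarantees.

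What your route buys is economy within the paper's own toolkit. Theorem~\ref{thm:keller} becomes a corollary of Theorem~\ref{thm:torun} and Dugundji's theorem. Your intermediate equivalence (for compact convex $E$, $\operatorname{ind}E=\infty$ iff $\operatorname{aff}E$ is infinite-dimensional) is essentially how the paper checks the hypothesis in Lemma~\ref{lem:pick} and Theorem~\ref{lem:main.sh}, via infinitely many linearly independent elements.

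What it costs is that it inverts the history. Keller's theorem has a direct, comparatively elementary proof, while Toru\'nczyk's characterization rests on much deeper $Q$-manifold machinery, so your argument is only as self-contained as that black box. You also genuinely use the normed structure (convex balls in Step~1, and Dugundji). Hence your argument does not reach the non-locally convex setting to which the Dobrowolski--Toru\'nczyk name refers.
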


As a direct consequence of Theorem \ref{thm:torun} and Theorem \ref{thm:keller}, 
$(\mathcal{H},\rho)$ is an AR and has the DC property, and the same 
holds true for any metric space $(\Omega,m)$ homeomorphic to $(\mathcal{H},\rho)$.
Moreover, $(\mathcal{H},\rho)$ has the FP property, the latter is homeomorphism invariant, so 
every me\-tric space $(\Omega,m)$ homeomorphic to $(\mathcal{H},\rho)$ has the FP property as well. 
In contrast, $(\ell_2,\Vert \cdot \Vert_2)$ does not have the FP property, a standard counterexample 
is the mapping $\mathbf{x} \mapsto \mathbf{x} + \mathbf{y}$ for some fixed 
$\mathbf{y} \in \ell_2\setminus \{\mathbf{0}\} $. \\

Before deriving the results mentioned in the introduction we recall some basic facts about 
bivariate copulas and doubly stochastic measures. 
$\cdb$ will denote the family of all bivariate copulas. For each copula $C$ the corresponding 
doubly stochastic measure will be denoted by $\mu_C$, i.e.,\ $\mu_C([0,x]\times[0,y]) = C(x,y)$ 
for all $x,y \in [0,1]$; $\mathcal{P}_{\cdb}$ will denote the family of all doubly stochastic measures.
$M$ will denote the minimum copula, $W$ the Fr\'echet--Hoeffding bound and $\Pi$ the product copula. 
 For more background on copulas and doubly stochastic measures we refer to \cite{DuSe,Nel}. 

Since the main subject of this paper, the metric $D_1$, is based on conditional distributions, 
Markov kernels play an important role.
A Markov kernel from $\mathbb{R}$ to $\mathbb{R}$ is a mapping $K : \mathbb{R} \times \mathcal{B}(\mathbb{R}) \to [0,1]$ such that for every fixed $E \in \mathcal{B}(\mathbb{R})$ the mapping $x \mapsto K(x,E)$ is Borel-measurable and for every fixed $x \in \mathbb{R}$ the mapping $E \mapsto K(x,E)$ is a probability measure. Given two arbitrary, real-valued random variables $X, Y$ on a joint 
probability space $(\Omega, \mathcal{A}, \mathbb{P})$, we refer to a Markov kernel $K$ as 
regular conditional distribution of $Y$ given $X$, if
$$
K(X(\omega), E) = E(\mathbf{1}_E \circ Y \mid X)(\omega)
$$
holds $\mathbb{P}$-almost surely for every $E \in \mathcal{B}(\mathbb{R})$. 
It is well-known (see, e.g., \cite{Kallenberg}) that for $X, Y$ as above, a regular 
conditional distribution of $Y$ given $X$ always exists and is unique for $\mathbb{P}^X$-a.e.
 $x \in \mathbb{R}$. If $(X,Y)$ has distribution function $F$ (in which case we will also 
 write $(X,Y) \sim F$ and let $\mu_F$ denote the corresponding probability measure on $\mathcal{B}(\mathbb{R}^2)$) we will let $K_F$ denote (a version of) the regular conditional distribution of $Y$ given $X$ and simply refer to it as Markov kernel of $F$. If $C$ is a copula then we will consider the Markov kernel of $C$ automatically as mapping $K_C : [0,1] \times \mathcal{B}([0,1]) \to [0,1]$. Defining the $x$-section of a set $G \in \mathcal{B}(\mathbb{R}^2)$ as $G_x := \{y \in \mathbb{R} : (x,y) \in G\}$ the so-called disintegration theorem (see \cite{Kallenberg}) yields
\begin{equation}
\int_{\mathbb{R}} K_F(x, G_x)\, d\mathbb{P}^X(x) = \mu_F(G).
\end{equation}
As a direct consequence, for every $C \in \mathcal{C}$ we get
$$\int_{[0,1]} K_C(x,E)\, d\mathbb{P}^X(x) = \int_{[0,1]} K_C(x,E)\, d\lambda(x) = \lambda(E)$$
for every $E \in \mathcal{B}([0,1])$, whereby $\lambda$ denotes the Lebesgue measure on $\mathbb{R}$.
In what follows we will also work with the restriction of $\lambda$ to a Borel set $E$, which 
we will denote by $\lambda_E$. For more background on conditional expectation and general disintegration we refer to \cite{Kallenberg}.

We call a copula $C$ completely dependent if there exists a $\lambda$-preserving transformation 
$h : [0,1] \to [0,1]$ (i.e., a transformation fulfilling $\lambda^h=\lambda$)
 such that $K(x,E) := \mathbf{1}_E(h(x))$ is a Markov kernel of $C$. 
The family of all $\lambda$-preserving transformations $h: [0,1] \to [0,1]$ will be
denoted by $\mathcal{T}$, the subclass of all $\lambda$-preserving bijections by $\mathcal{T}^*$. 
We will write $\cdb^{cd}$ and $\cdb^{mcd}$ for the corresponding families of 
completely dependent and mutually completely dependent copulas, respectively.  
In the sequel we will view $\mathcal{T}$ and $\mathcal{T}^*$ as subsets of the Banach space 
$L_1([0,1]):=L_1([0,1],\mathcal{B}([0,1]),\lambda)$, i.e., 
equivalence classes will be considered.  
For more properties of (mutual) complete dependence we refer to \cite{p06} and the references therein.

Markov kernels were used in \cite{p06} in order to construct metrics stronger than the standard uniform metric $d_\infty$, defined by
\begin{align}
d_\infty(C_1, C_2) := \max_{(x,y) \in [0,1]^2} |C_1(x,y) - C_2(x,y)|
\end{align}
on $\mathcal{C}$. It is well known that the metric space $(\mathcal{C}, d_\infty)$ is compact and that pointwise and uniform convergence of a sequence of copulas $(C_n)_{n \in \mathbb{N}}$ are equivalent (see \cite{DuSe, Nel}). Following \cite{p06} and defining
\begin{align}
D_1(C_1, C_2) &:= \int_{[0,1]} \int_{[0,1]} \big|K_{C_1}(x,[0,y]) - K_{C_2}(x,[0,y])\big|\, d\lambda(x)\, d\lambda(y), \quad C_1,C_2 \in \cdb,   
\end{align}
it can be shown that $D_1$ is a metric and that the resulting metric space 
$(\cdb,D_1)$ is sepa\-rable and complete, but not compact, and has a diameter of $\frac{1}{2}$. 
Moreover, we have that $D_1(C, \Pi) \in \big[0, \tfrac{1}{3}\big]$ for every $C \in \mathcal{C}$ 
and that  $D_1(C,\Pi)$ is maximal if and only if $C$ is completely dependent. The metric $D_1$ was originally introduced in order to construct a dependence measure which, contrary to $d_\infty$, is capable of separating independence and complete dependence. The resulting $D_1$-based dependence measure $\zeta_1$ introduced in \cite{p06} is defined as
\begin{equation}
\zeta_1(C) := 3 \cdot D_1(C, \Pi), \quad C \in \cdb. 
\end{equation}
In the continuous setting, Chatterjee's by now famous dependence measure 
(see \cite{Chatterjee}) boils down to the $L_2$-version of $\zeta_1$, i.e., 
a normalized version of $D_2(C,\Pi)$ is considered.  
As shown in \cite{p06}, replacing $D_1$ by $D_p$, defined analogously by
\begin{align}
D_p^p(C_1, C_2) &:= \int_{[0,1]} \int_{[0,1]} \big|K_{C_1}(x,[0,y]) - K_{C_2}(x,[0,y])\big|^p\, d\lambda(x)\, d\lambda(y), \quad p \in [1,\infty),  
\end{align}
yields a family of metrics on $\cdb$, which are not equivalent (as metrics) but all
induce the same topology on $\cdb$. As a direct consequence, all results derived in this article 
for $D_1$ also hold for every $D_p$ with $p \in [1,\infty)$.

\section{Results for the full metric space}\label{sec:results}
We first study topological properties of the whole metric space $(\cdb,D_1)$,  
want to show that $(\cdb,D_1)$ is homeomorphic to $(\ell_2,\Vert \cdot \Vert_2)$, and 
start with the following lemma.
\begin{lem}\label{lem:D1AR}
$(\cdb,D_1)$ is an AR.
\end{lem}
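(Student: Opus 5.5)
The plan is to realize $(\cdb,D_1)$ isometrically as a convex subset of a normed space and then invoke the result of \cite{Du} quoted in Section \ref{sec2}: every convex subset of a normed space is an AR.

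\textbf{The embedding.} Consider the Banach space $L_1([0,1]^2,\lambda_2)$ and define
$$\Phi: \cdb \to L_1([0,1]^2), \qquad \Phi(C)(x,y) := K_C(x,[0,y]).$$
By the $\lambda$-a.e. uniqueness of Markov kernels, $\Phi(C)$ is a well-defined equivalence class. It is measurable in $(x,y)$ because $y\mapsto K_C(x,[0,y])$ is a right-continuous distribution function, so $\Phi(C)$ is the limit of functions measurable on dyadic rectangles. By the very definition of $D_1$,
$$\Vert \Phi(C_1)-\Phi(C_2)\Vert_1 = D_1(C_1,C_2).$$
Hence $\Phi$ is an isometry onto its image $\Phi(\cdb)$, and in particular injective, since $D_1$ is a metric.

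\textbf{Convexity of the image.} Let $C_1,C_2 \in \cdb$ and $\alpha\in[0,1]$, and set $C:=\alpha C_1+(1-\alpha)C_2$, which is again a copula. Then $\mu_C=\alpha\mu_{C_1}+(1-\alpha)\mu_{C_2}$, and the kernel $K:=\alpha K_{C_1}+(1-\alpha)K_{C_2}$ is a Markov kernel. By linearity, the disintegration identity $\int K(x,G_x)\,d\lambda(x)=\mu_C(G)$ holds for all Borel sets $G$, so $K$ is a version of $K_C$. Therefore
$$\Phi(C)=\alpha\Phi(C_1)+(1-\alpha)\Phi(C_2),$$
and $\Phi(\cdb)$ is a convex subset of the normed space $L_1([0,1]^2)$.

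\textbf{Conclusion.} By \cite{Du}, $\Phi(\cdb)$ is an AR. Being an AR is a topological property, since it is defined through closed embeddings and retractions, and so it is invariant under homeomorphisms. Because $\Phi$ is an isometry, $(\cdb,D_1)$ is an AR as well.

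The only genuinely delicate step is identifying the kernel of the convex combination, that is, checking that the mixture kernel is a regular conditional distribution for $\mu_C$. Once the disintegration characterization of $K_C$ is used, this is immediate. It should also be noted that no closedness of $\Phi(\cdb)$ is needed, because \cite{Du} applies to arbitrary convex subsets.
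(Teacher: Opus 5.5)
Your proposal is correct and follows essentially the same route as the paper: both embed $(\cdb,D_1)$ isometrically into $L_1([0,1]^2)$ via $C\mapsto K_C(x,[0,y])$, note that the map respects convex combinations so the image is convex, and conclude via the result of \cite{Du}. Your extra justifications (that the image is a well-defined equivalence class, joint measurability, and identifying the mixture kernel through disintegration) fill in details the paper treats as obvious.
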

\begin{proof}
We show that $(\cdb,D_1)$ is homeomorphic to a convex (and closed) subset $E$ of 
the Banach space $L_1([0,1]^2)$. 
Define the mapping $\kappa: \cdb \to L_1([0,1]^2)$ by 
\begin{equation}\label{eq:embed.to.L1}
(\kappa(A))(x,y):=K_A(x,[0,y])=:H_A(x,y)
\end{equation}
and set $E:=\kappa(\cdb)$. Then obviously $\kappa$ is an isometry between 
$(\cdb,D_1)$ and $(E,\Vert \cdot \Vert_1)$, which preserves convex combinations, 
so $\kappa$ is injective and continuous
and the same holds for its inverse $\kappa^{-1}: (E,\Vert \cdot \Vert_1) \to (\cdb,D_1)$.
As a direct consequence, $\kappa$ is a homeomorphism between 
$(E,\Vert \cdot \Vert_1)$ and $(\cdb,D_1)$. Since $E$ is as convex subset of 
$L_1([0,1]^2)$ an AR, it follows that so is $(\cdb,D_1)$ and the proof is complete.
\end{proof}
The following theorem provides an affirmative answer (with simple proof) 
to the first part of Problem 3 in \cite{LiuYang2023}. 
\begin{thm}\label{thm:main.D1.full}
$(\cdb,D_1)$ is homeomorphic to $\ell_2$. 
\end{thm}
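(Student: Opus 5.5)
We apply Toru\'nczyk's characterization for $\ell_2$ (Theorem \ref{thm:torun.l2}). We know from \cite{p06} that $(\cdb,D_1)$ is complete and separable, and Lemma \ref{lem:D1AR} shows it is an AR. So it only remains to verify the SDLFA property. Let $\varepsilon:\cdb\to(0,\infty)$ be given; we have to construct $f:\cdb\times\mathbb{N}\to\cdb$ with $D_1(f(A,n),A)<\varepsilon(A)$ such that the family $\{f(\cdb\times\{n\}):n\in\mathbb{N}\}$ is locally finite. The definition as stated does not ask for continuity of $f$. If continuity turns out to be needed, the construction below is continuous in $A$ for fixed parameters, and $\varepsilon$ may be replaced by a continuous minorant by standard paracompactness arguments.

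\textbf{Construction.} The idea is to mix $A$ with a small amount of an ``$n$-detectable'' singular perturbation. For $n\in\mathbb{N}$ let $S_n$ be a fixed mutually completely dependent copula that oscillates at scale $1/n$. Take, for instance, the shuffle $S_n$ with kernel $\delta_{h_n(x)}$, where $h_n$ is the $\lambda$-preserving map $h_n(x)=nx \bmod 1$ (this is in $\cdb^{cd}$, which suffices). Choose a weight $t(A)\in(0,1]$ with $t(A)\cdot\tfrac12<\varepsilon(A)$, for instance $t(A)=\min\{1,\varepsilon(A)\}$, and set
\[
f(A,n):=(1-t(A))\,A+t(A)\,S_n .
\]
Since $\kappa$ from Lemma \ref{lem:D1AR} preserves convex combinations, we get $D_1(f(A,n),A)=t(A)\,D_1(A,S_n)\le t(A)/2<\varepsilon(A)$, using that the diameter is $\tfrac12$. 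This gives (\ref{ineq:approx}).

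\textbf{Local finiteness.} This is the main obstacle. A naive mixture fails: if $t(A)$ is tiny, then $f(A,n)$ is close to $A$, and the images $f(\cdb\times\{n\})$ all come near every point. Two modifications are therefore needed.

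First, the mixing weight must not be allowed to go to zero independently of $n$. We use weights $t_n(A)$ tied to $n$, for example $t_n(A)=\min\{\varepsilon(A),1\}$ as before, combined with the requirement that $t_n(A)\ge c>0$ on the relevant region.

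Second, and more importantly, we exploit that $H_{S_n}(x,y)=\mathbf{1}_{[h_n(x),1]}(y)$ and that, for each $A$, the quantity
\[
\Phi_n(B):=\int\!\!\int \bigl(H_B(x,y)-\textstyle\int_{I_{n}(x)}H_B\,n\,d\lambda\bigr)\dots
\]
a measure of the oscillation of $x\mapsto H_B(x,\cdot)$ within the dyadic-type blocks of length $1/n$, is at least of order $t$ for $B=f(A,n)$. This holds because $S_n$ is very oscillating at scale $1/n$, whereas any fixed $C$ has oscillation at scale $1/n$ tending to zero as $n\to\infty$. The latter is martingale convergence in $L_1([0,1]^2)$, and the bound is uniform on a $D_1$-neighbourhood of $C$ by the $1$-Lipschitz property of conditional expectations.

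Concretely, we fix $C$ and a neighbourhood $U=B(C,\eta)$. For $B\in f(\cdb\times\{n\})$ to lie in $U$, we would need the block-oscillation of $H_B$ to be both $\ge c\,t$ and $\le o(1)+\eta$. Choosing $t$ bounded below locally then gives a contradiction for large $n$. Because $t(A)$ can be small only where $\varepsilon$ is small, the delicate point is making the lower bound on $t$ local. I would handle this by letting $f(A,n):=A$-independent choices fail only on sets that stay away from $C$, using continuity or local boundedness below of $\varepsilon$ near $C$. Where this does not suffice, I would replace $\varepsilon$ by $\tilde\varepsilon(A)=\inf_B(\varepsilon(B)+D_1(A,B))$ (if $\varepsilon$ is not bounded below locally, one uses that SDLFA may be checked for continuous $\varepsilon$). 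Once local finiteness is established, Theorem \ref{thm:torun.l2} yields that $(\cdb,D_1)$ is homeomorphic to $\ell_2$.
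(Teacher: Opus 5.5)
Your overall strategy (Toru\'nczyk's theorem, with SDLFA obtained by mixing $A$ with the oscillating shuffles $S_n$ and separating images from a fixed $C$ by an oscillation functional) is viable. However, the proof has a genuine gap exactly at the step you call the main obstacle: the lower bound on the oscillation of $f(A,n)$. Moreover, $\Phi_n$ is never actually defined. Let $E_n$ average $H(\cdot,y)$ over the blocks $I_k=[\tfrac{k}{n},\tfrac{k+1}{n})$, and set $\Phi_n(B)=\Vert H_B-E_nH_B\Vert_1$. Your reason (``$S_n$ oscillates, a fixed $C$ does not'') concerns a fixed copula. But $A$ ranges over all of $\cdb$ and may itself oscillate at scale $1/n$, and the seminorm estimate only gives $\Phi_n(B)\ge\tfrac{t}{3}-(1-t)\Phi_n(A)$, which is useless. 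In fact a bound of order $t$ is false. Let $K_A(x,\cdot)$ be $\tfrac{1}{1-t}$ times Lebesgue measure restricted to the complement of an arc of length $t$ (taken mod $1$) centred at $h_n(x)$. Then $A$ is a copula, every block average of $K_B$ equals $\lambda$, and $\Phi_n(B)\le\tfrac{5}{4}t^2$. What does hold, uniformly in $A$ and $n$, is $\Phi_n(B)\ge t^2/64$, and its proof must use the singularity of $S_n$:
\begin{itemize}
\item Write $\Phi_n(B)=\sum_k\int_{I_k}W_1(K_B(x,\cdot),\bar\mu_k)\,d\lambda(x)$, with $\bar\mu_k$ the block average of $K_B$.
\item Since $K_B(x,\cdot)\ge t\,\delta_{h_n(x)}$, the $W_1$-cost is at least $\tfrac{t}{2}r_k(h_n(x))$, where $r_k(z)$ is the least $r$ with $\bar\mu_k([z-r,z+r])\ge t/2$.
\item A probability measure has at most $2/t$ disjoint such windows, so $\lambda(\{r_k\le t/16\})\le\tfrac12$.
\item Since $h_n$ maps each block linearly onto $[0,1)$, the bound follows.
\end{itemize}
Combined with $\Phi_n(B)\le\Phi_n(C)+2D_1(B,C)$ and $\Phi_n(C)\to0$, this yields the separation you want. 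The convergence $\Phi_n(C)\to0$ comes from density of continuous functions, not martingale convergence, since the partitions are not nested.

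Your localization paragraph is not an argument either, and $\tilde\varepsilon$ may vanish near points where a discontinuous $\varepsilon$ is not locally bounded below. Toru\'nczyk's theorem needs $f$ continuous and the family locally finite in the usual sense, so take $\varepsilon$ continuous. Then $t=\min\{1,\varepsilon\}$ makes $f(\cdot,n)$ continuous, because $D_1(f(A,n),f(A',n))\le D_1(A,A')+\tfrac12|t(A)-t(A')|$. Localization is then short. Choose $\delta$ with $\varepsilon\ge\varepsilon(C)/2$ on $B(C,\delta)$, and put $\tau=\min\{1,\varepsilon(C)/2,\delta\}$ and $\eta=\min\{\delta/2,\tau^2/256\}$. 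Suppose $f(A,n)\in B(C,\eta)$. Either $A\in B(C,\delta)$ and $t(A)\ge\tau$, or $\delta\le D_1(A,C)<t(A)/2+\delta/2$ and again $t(A)>\tau$. In both cases $\tau^2/64\le\Phi_n(f(A,n))\le\Phi_n(C)+2\eta\le\Phi_n(C)+\tau^2/128$, which fails for all large $n$.

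The paper proceeds differently. It glues $C$ into $[0,1]\times[0,a]$ and $C_n=\tfrac{1}{n+1}W+\tfrac{n}{n+1}\Pi$ into the top strip, and bounds $D_1(f(C,n),C)\le 3(1-a)$ directly. It then only checks that the images for different $n$ are pairwise disjoint, which is the point-finiteness of its stated definition. That does not give local finiteness in the usual sense, since $D_1(\mathcal{G}_a(C,C_n),\mathcal{G}_a(C,\Pi))=(1-a)^2D_1(C_n,\Pi)\to0$. So your concern was justified. Your mixing construction, once supplied with the $t^2$ bound, delivers the property in the form Toru\'nczyk's theorem requires.
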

\begin{proof}
It was shown in \cite{p06} that the metric space $(\cdb,D_1)$ is complete and separable. Since 
Lemma \ref{lem:D1AR} states that $(\cdb,D_1)$ is an AR, according to Theorem \ref{thm:torun.l2}
it suffices to prove that $(\cdb,D_1)$ has the SDLFA property.
For every $a \in (0,1)$ define the affine contractions $L,U: [0,1]^2 \to [0,1]^2$ by 
$$
L(x,y)=(x,ay),\quad U(x,y)=(x,a+(1-a)y).
$$
For every pair of copulas $A,B$ and every $a \in [0,1]$ let $\mathcal{G}_a(A,B)$ denote the 
unique copula $C$ whose corresponding doubly stochastic measure $\mu_C$ is given by
$$
\mu_C=a\mu_A^L + (1-a) \mu_B^U.
$$
Then obviously the Markov kernel $K_{\mathcal{G}_a(A,B)}$ of $\mathcal{G}_a(A,B)$ is given by 
\begin{equation}\label{eq:glue}
K_{\mathcal{G}_a(A,B)}(x,[0,y]) =
\begin{cases}
  a\,K_A\left(x,\left[0,\tfrac{y}{a}\right]\right)  & \text{if } x,y \in [0,1]\times [0,a], \\
  a+(1-a) K_B\left(x,\left[0,\tfrac{y-a}{1-a}\right]\right)& \text{if } x,y \in [0,1]\times (a,1].
\end{cases}
\end{equation}
Figure \ref{fig:glue} depicts a sample of size $n=10.000$ of the copula 
$\mathcal{G}_\frac{3}{4}(\Pi,M)$. 
Suppose now that $\varepsilon: \Omega \to (0,\infty)$ is an arbitrary function. 
Define $\varepsilon': (\cdb,D_1) \to (0,1]$ by $\varepsilon'(C)=\min(\varepsilon(C),1)$.  For every 
$n \in \mathbb{N}$ setting $C_n:=\frac{1}{n+1} W + \frac{n}{n+1} \Pi \in \cdb$ and 
defining the function $f: \cdb \times \mathbb{N} \to \cdb$ by
\begin{align}
f(C,n)=\mathcal{G}_{1-\frac{\varepsilon'(C)}{4}}(C,C_n),
\end{align}  
it follows immediately that for every $n_1 \neq n_2$ we have that 
$$
\{f(C,n_1): \, C \in \cdb\} \cap \{f(C,n_2): \, C \in \cdb\} = \emptyset.
$$
Hence, writing $f(\cdb,n):=\{f(C,n): \, C \in \cdb\}$ for every $n \in \mathbb{N}$, 
the family $\{f(\cdb,n): \, n \in \mathbb{N}\}$ is locally finite (in fact, even pairwise disjoint) 
in $\cdb$.  \\
It remains to show that ineq. \eqref{ineq:approx} holds. Fix $C \in \cdb$ and to simplify notation 
write $a:=1-\frac{\varepsilon'(C)}{4} \in [\frac{3}{4},1)$. 
Using eq. \eqref{eq:glue} we obviously have 
\begin{align*}
D_1\left(\mathcal{G}_{1-\frac{\varepsilon'(C)}{4}}(C,C_n),C\right) = I + II
\end{align*}
with $I$ and $II$ given by
\begin{align*}
I&= \int_{[0,1]} \int_{[0,a]} \vert a\, K_C\left(x,\left[0,\tfrac{y}{a}\right]\right) - 
K_C(x,[0,y])\vert \,d\lambda(y)d\lambda(x) \\
II&= \int_{[0,1]} \int_{[a,1]} \vert a + (1-a)K_{C_n}\left(x,\left[0,\tfrac{y-a}{1-a}\right]\right) - 
K_C([0,y]) \vert  \,d\lambda(y)d\lambda(x).
\end{align*}
For the first integral, using the triangle inequality and change of coordinates we get
\begin{align*}
I&= \int_{[0,1]} \int_{[0,a]} \left| a\, K_C\left(x,\left[0,\tfrac{y}{a}\right]\right) - 
(a+1-a)K_C(x,[0,y]) \right| \,d\lambda(y)d\lambda(x) \\
&= \int_{[0,1]} \int_{[0,a]} \left| a\, K_C\left(x,\left[0,\tfrac{y}{a}\right]\right) - 
a\,K_C(x,[0,y]) \, + (1-a)\,K_C(x,[0,y])\right| \,d\lambda(y)d\lambda(x) \\
&\leq \int_{[0,1]} \int_{[0,a]}a\, K_C\left(x,\left[0,\tfrac{y}{a}\right]\right) - 
a\,K_C(x,[0,y]) \,d\lambda(y)d\lambda(x) + a(1-a) \\
&\leq \int_{[0,1]} \int_{[0,1]}a^2\, K_C\left(x,\left[0,v\right]\right)\,d\lambda(y)d\lambda(x)  - 
\int_{[0,1]} \int_{[0,a]}a\,K_C(x,[0,y]) \,d\lambda(y)d\lambda(x) + a(1-a) \\
&\leq \int_{[0,1]} \int_{[0,1]}a\, K_C\left(x,\left[0,v\right]\right)\,d\lambda(y)d\lambda(x)  - 
\int_{[0,1]} \int_{[0,a]}a\,K_C(x,[0,y]) \,d\lambda(y)d\lambda(x) + a(1-a) \\
&= \int_{[0,1]} \int_{[a,1]} a\,K_C(x,[0,y])\,d\lambda(y)d\lambda(x) + a(1-a) \\
&\leq 2a(1-a) \leq 2(1-a).
\end{align*}
Since the integrand of $II$ is bounded from above by $1$, we have $II \leq 1-a$, which altogether yields 
\begin{align*}
D_1\left(f(C,n),C\right) = I + II \leq 3(1-a) = 3\,\tfrac{\varepsilon'(C)}{4} < \varepsilon'(C) 
\leq \varepsilon(C)
\end{align*} 
Since $C$ and $n$ were arbitrary this completes the proof.
\end{proof}
\begin{figure}[htbp]
  \centering
  \includegraphics[width=0.7\textwidth]{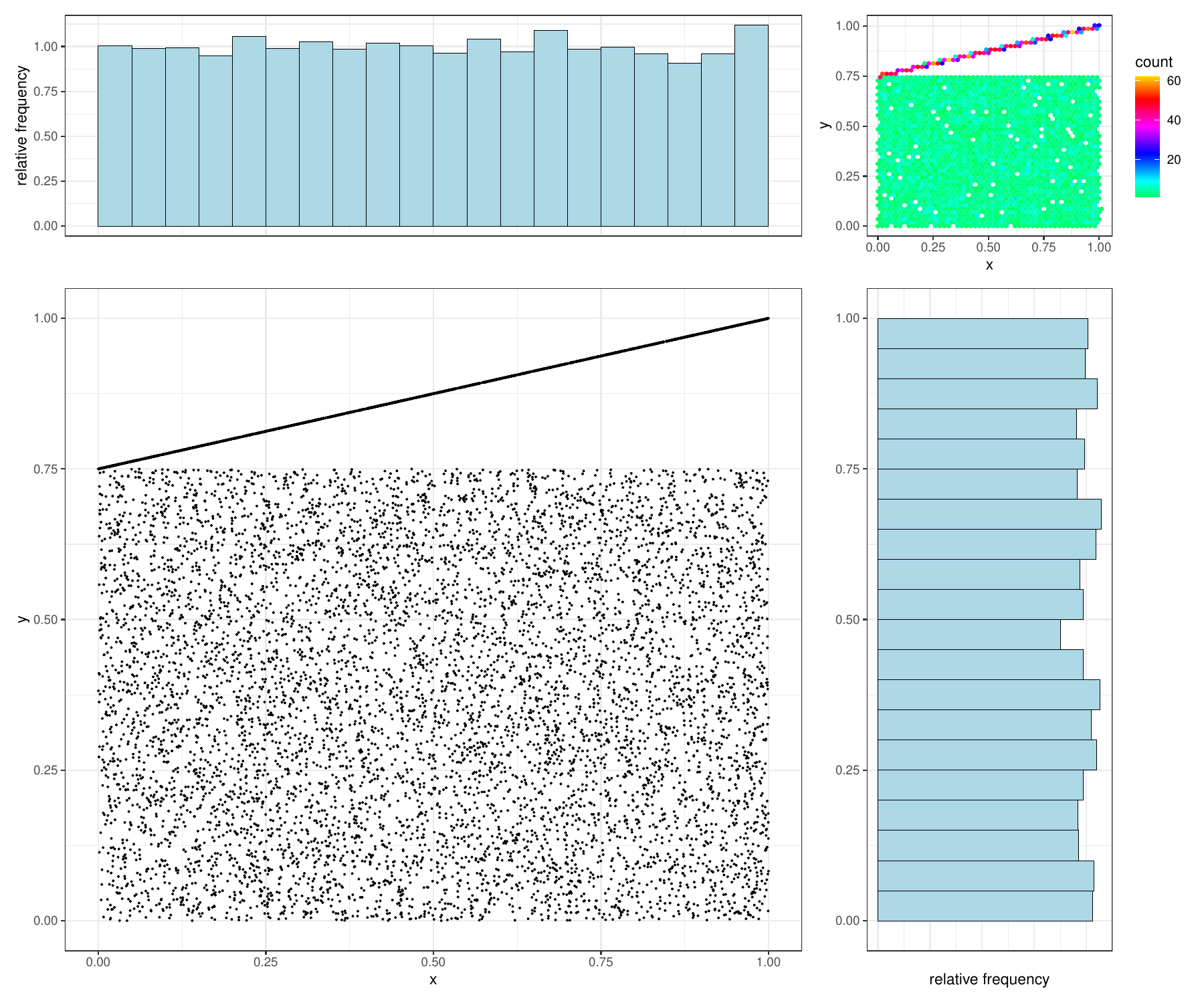}
  \caption{Sample of size $n=10.000$ of the copula $\mathcal{G}_{\frac{3}{4}}(\Pi,M)$ (upper
  left panel); corresponding marginal histograms (upper left and lower right panel)
  and heatmap of the sample (upper right panel).}
  \label{fig:glue}
\end{figure}

Using standard properties of the infinite-dimensional separable Hilbert space 
$(\ell_2,\Vert \cdot \Vert_2)$ we obtain 
the following immediate consequence, which is striking in so far, as it once more underlines 
the fundamental difference to the uniform metric $d_\infty$ for which $(\cdb,d_\infty)$ is compact:
\begin{cor}\label{cor:not.locally.compact}
For every $C \in \cdb$ and every $r>0$ the closed ball 
$$
\overline{B}(C,r):=\{A \in \cdb: D_1(A,C)\leq r\}
$$ 
is not compact in $(\cdb,D_1)$. In other words, $(\cdb,D_1)$ is nowhere locally compact.   
\end{cor}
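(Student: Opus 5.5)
The plan is to transfer the statement from $\ell_2$ via the homeomorphism of Theorem \ref{thm:main.D1.full}, after first proving directly that $\ell_2$ is nowhere locally compact. Let $\phi:(\cdb,D_1)\to(\ell_2,\Vert\cdot\Vert_2)$ be a homeomorphism. Suppose, for contradiction, that some closed ball $\overline{B}(C,r)$ with $r>0$ is compact. It contains the open ball $B(C,r)$, which is an open neighbourhood of $C$. Then $\phi(\overline{B}(C,r))$ is a compact subset of $\ell_2$ containing the open set $\phi(B(C,r))$. So this open set contains some norm ball $B_{\ell_2}(\phi(C),\delta)$ with $\delta>0$. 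Hence the closed set $\overline{B}_{\ell_2}(\phi(C),\delta/2)$ lies inside a compact set and is therefore itself compact.

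To see that this is impossible, I will exhibit a sequence in $\overline{B}_{\ell_2}(\phi(C),\delta/2)$ with no convergent subsequence. Take the points $\phi(C)+\tfrac{\delta}{2}\mathbf{e}_k$, $k\in\mathbb{N}$, where $\mathbf{e}_k$ are the unit vectors. Any two of them are at distance $\tfrac{\delta}{2}\sqrt{2}$, so no subsequence is Cauchy and no subsequence converges. This contradicts sequential compactness in a metric space. Hence no closed ball $\overline{B}(C,r)$ is compact.

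The nowhere-local-compactness statement then follows. Every neighbourhood $V$ of $C$ contains some $\overline{B}(C,r)$. If $V$ had compact closure, then $\overline{B}(C,r)$, being closed in $\overline{V}$, would be compact, which we have just excluded.

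The only subtle point is the first step, namely that the compactness has to be pulled through $\phi$ rather than the ball itself. Metric balls are not preserved by $\phi$, so I argue with open neighbourhoods, and only the interior point $\phi(C)$ and continuity of $\phi^{-1}$ are used. Everything else is routine.

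Alternatively, one could give a direct $D_1$ argument. Glue $C$ with a family of pairwise $D_1$-separated copulas on a small strip, as in the proof of Theorem \ref{thm:main.D1.full}. For instance, take $\mathcal{G}_a(C,A_k)$ with $A_k$ completely dependent copulas whose kernels are pairwise far apart in $D_1$, such as shuffles given by $h_k(x)=2^k x \bmod 1$, with $a$ close to $1$. These give infinitely many points in the ball with pairwise distances bounded below by $(1-a)^2\cdot c$ for some constant $c>0$. Bounding that constant from below is the step I would expect to need care, which is why I would prefer the transfer argument above.
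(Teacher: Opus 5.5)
Your proof is correct and follows the paper's route: the paper treats the corollary as an immediate consequence of Theorem \ref{thm:main.D1.full} together with standard properties of $\ell_2$. You supply the details the paper omits, namely pulling compactness back through the open neighbourhood $\phi(B(C,r))$ rather than through metric balls, and using the separated sequence $\phi(C)+\tfrac{\delta}{2}\mathbf{e}_k$. Your sketched direct alternative would also work: $D_1(\mathcal{G}_a(C,A),\mathcal{G}_a(C,B))=(1-a)^2D_1(A,B)$, and for the doubling maps $D_1(C_{h_j},C_{h_k})=\Vert h_j-h_k\Vert_1$ is positive for each $j\neq k$ and tends to $\tfrac13$ as $|k-j|\to\infty$, so it is bounded away from $0$.
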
 

The fact that $(\cdb,D_1)$ is nowhere locally compact raises the question, whether 
standard nonparametric subfamilies of $(\cdb,D_1)$ can be compact. We will answer the question affirmatively for the families of Extreme Value and spatially homogeneous copulas, but first
focus on mutually completely dependent copulas. 

\section{Results for subfamilies related with univariate functions}\label{sec:4}
\subsection{The family $\cdb^{mcd}$ of all completely dependent copulas}
Although the families $\cdb^{mcd}$ and $\cdb^{cd}$ may seem quite pathological since they 
only contain copulas describing the situation 
of full predictability of a random variable $Y$ given another variable $X$, both families are
rather large topologically: 
In 1968, working with Markov operators, Kim showed that, in the language of copulas, 
$\cdb^{mcd}$ is co-meager in $(\cdb,d_\infty)$. 
Viewed as a subset of $(\cdb,D_1)$, however, $\cdb^{cd}$ (hence $\cdb^{mcd}$) 
is very small since it is nowhere dense.    
Despite being small in $(\cdb,D_1)$, the family of all mutually completely dependent on its own 
is topologically very large - in this section we will show that $(\cdb^{mcd},D_1)$ is homeomorphic to 
$(\ell_2,\Vert \cdot \Vert_2)$, implying in particular that $(\cdb^{mcd},D_1)$ is nowhere  
locally compact. Our results build upon \cite{Nhu}, where the author showed that 
$\mathcal{T}^*$ endowed with the weak topology is homeomorphic to $(\ell_2,\Vert \cdot \Vert_2)$. 
As shown in \cite{Nhu}, for $h,h_1,h_2,\ldots \in \mathcal{T}^*$ we have that 
$(h_n)_{n \in \mathbb{N}}$ converges weakly to $h$ if and only if ($\Delta$ denoting the symmetric 
difference)
\begin{equation}\label{eq:weak}
\lim_{n \rightarrow \infty} \lambda \left(h_n(B) \Delta h(B) \right)=0
\end{equation} 
holds for every $B \in \mathcal{B}([0,1])$. 
To obtain our main result it suffices to prove that the convergence in \eqref{eq:weak} is 
equivalent to $L_1$-convergence in $\mathcal{T}^*$. We start with the following lemma
considering convergence for the larger family $\mathcal{T}$:
\begin{lem}\label{lem:L1.in.prob}
For all $h,h_1,h_2,\ldots \in \mathcal{T}$ the following three assertions are equivalent:
\begin{enumerate}
\item $\lim_{n \to \infty} \Vert h_n-h \Vert_1=0$.
\item $(h_n)_{n \in \mathbb{N}}$ converges in probability to $h$, i.e., for every $\varepsilon>0$
we have 
$$
\lim_{n \to \infty}  \lambda \left(\{x \in [0,1]: |h_n(x)-h(x)| > \varepsilon\}\right)=0.
$$
\item For every $B \in \mathcal{B}([0,1])$ we have
$$
\lim_{n \rightarrow \infty} \lambda \left(h_n^{-1}(B) \Delta h^{-1}(B) \right)=0.
$$
\end{enumerate}
\end{lem}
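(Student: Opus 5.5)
The plan is to prove $1\Leftrightarrow 2$ by standard measure-theoretic facts, and then $2\Rightarrow 3$ and $3\Rightarrow 2$ separately, using $\lambda$-preservation throughout.

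\emph{$1\Leftrightarrow 2$.} All functions take values in $[0,1]$, so the sequence $(h_n-h)$ is uniformly bounded. Markov's inequality gives $1\Rightarrow 2$. Conversely, for $\varepsilon>0$ we have
$\Vert h_n-h\Vert_1\le \varepsilon+\lambda(\{|h_n-h|>\varepsilon\})$, which gives $2\Rightarrow 1$.

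\emph{$3\Rightarrow 2$.} Fix $\varepsilon>0$ and choose $m$ with $1/m<\varepsilon$. Partition $[0,1]$ into the intervals $I_j=[\frac{j-1}{m},\frac{j}{m})$ for $j=1,\dots,m$ (the last one closed). If $x\in h^{-1}(I_j)\cap h_n^{-1}(I_j)$, then $|h_n(x)-h(x)|\le 1/m<\varepsilon$. Hence
$$\{|h_n-h|>\varepsilon\}\subseteq\bigcup_{j=1}^m \big(h_n^{-1}(I_j)\,\Delta\, h^{-1}(I_j)\big),$$
and the measure of the right-hand side tends to $0$ by 3 applied to the finitely many sets $I_j$.

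\emph{$2\Rightarrow 3$.} This is the main obstacle, since an arbitrary Borel set $B$ need not be an interval and preimages of Borel sets are not stable under small perturbations. The plan has three steps.

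\textbf{(a) Intervals.} First treat $B=[0,t]$. One has
$$h_n^{-1}([0,t])\,\Delta\, h^{-1}([0,t])\subseteq \{|h_n-h|>\delta\}\cup h^{-1}\big((t-\delta,t+\delta]\big).$$
Since $h$ is $\lambda$-preserving, the last set has measure at most $2\delta$. So the $\limsup$ of the symmetric difference is at most $2\delta$ for every $\delta>0$, hence it is $0$. Here $\lambda$-preservation is exactly what prevents mass from accumulating at the boundary points.

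\textbf{(b) Finite unions.} The estimate extends to finite disjoint unions of intervals by subadditivity, using $(\bigcup A_i)\Delta(\bigcup B_i)\subseteq\bigcup (A_i\Delta B_i)$ together with the fact that preimages preserve unions and differences.

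\textbf{(c) General Borel sets.} For general $B$ I will use approximation. By regularity of $\lambda$, choose a finite union of intervals $J$ with $\lambda(B\Delta J)<\delta$. Then
$$\lambda\big(h_n^{-1}(B)\,\Delta\, h_n^{-1}(J)\big)=\lambda\big(h_n^{-1}(B\Delta J)\big)=\lambda(B\Delta J)<\delta,$$
again because $h_n$ is $\lambda$-preserving, and the same bound holds for $h$. The triangle inequality for the pseudometric $d(A,A')=\lambda(A\Delta A')$ then gives
$$\limsup_{n\to\infty}\lambda\big(h_n^{-1}(B)\,\Delta\, h^{-1}(B)\big)\le 2\delta,$$
and letting $\delta\to 0$ finishes the proof.

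Thus the key ingredient in both (a) and (c) is that $\lambda^{h_n}=\lambda^{h}=\lambda$: it converts measures of preimages into Lebesgue measures of the sets themselves.
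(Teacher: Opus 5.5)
Your proposal is correct and follows the paper's proof step by step: Markov's inequality plus a truncation bound for $1\Leftrightarrow 2$, the partition into intervals of length $1/m<\varepsilon$ for $3\Rightarrow 2$, and, for $2\Rightarrow 3$, the passage from intervals to finite unions to general Borel sets via approximation and $\lambda$-preservation. The only difference is in the interval step: you prove it directly through the inclusion $h_n^{-1}([0,t])\,\Delta\, h^{-1}([0,t])\subseteq\{|h_n-h|>\delta\}\cup h^{-1}((t-\delta,t+\delta])$, whereas the paper invokes the continuous mapping theorem, using that $\mathbf{1}_{[a,b]}$ is continuous $\lambda^h=\lambda$-almost everywhere; your estimate is a self-contained proof of that special case.
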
 
\begin{proof}
The equivalence of the first two assertions is straightforward to verify: on the one hand, 
using Markov/Chebyshev inequality for every $\varepsilon>0$ we have
$$
\lambda\left(\{x \in [0,1]: |h_n(x)-h(x)| > \varepsilon\}\right) \leq \tfrac{\Vert h_n-h \Vert_1}{\varepsilon}.
$$
And on the other hand, considering $|h_n(x)-h(x)| \in [0,1]$ yields
\begin{align*}
\Vert h_n-h \Vert_1 &= \int_{\{x \in [0,1]: |h_n(x)-h(x)|>\frac{\varepsilon}{2}\}} |h_n(x)-h(x)| d\lambda(x) + \tfrac{\varepsilon}{2} \\
&\leq \lambda\left(\{x \in [0,1]: |h_n(x)-h(x)|>\tfrac{\varepsilon}{2}\}\right)\, + \,\tfrac{\varepsilon}{2}. 
\end{align*} 
To show that the second assertion implies the third one we proceed as follows: For every interval
$[a,b] \subseteq [0,1]$ the function $\mathbf{1}_{[a,b]}$ obviously is continuous 
$\lambda^h=\lambda$-almost everywhere, so the continuous mapping theorem directly yields
that $(\mathbf{1}_{[a,b]} \circ h_n)_{n \in \mathbb{N}}$ converges in probability to 
$\mathbf{1}_{[a,b]} \circ h$, i.e., for every $\varepsilon>0$ 
$$
\lim_{n \rightarrow \infty}\lambda \left(h_n^{-1}([a,b]) \Delta h^{-1}([a,b]) \right) = 
\lim_{n \rightarrow \infty}\lambda\left(\{x \in [0,1]: |\mathbf{1}_{[a,b]} \circ h_n(x)-\mathbf{1}_{[a,b]} \circ h(x)|>\tfrac{\varepsilon}{2}\}\right)=0
$$ 
holds. Extending from $[a,b]$ to finite unions of intervals is straightforward. Since for each
Borel set $B \in \mathcal{B}([0,1])$ and each $\varepsilon>0$ there exists a 
finite union $U$ of intervals with $\lambda(B \Delta U) < \varepsilon$, using the facts that 
$h_n,h$ are $\lambda$-preserving and that $h_n^{-1},h^{-1}$ preserve set operations like the 
symmetric difference, the third assertions follows.   \\
Suppose now that third the assertion holds and let $\varepsilon>0$ be arbitrary but fixed. For every
index $N \in \mathbb{N}$ fulfilling $\frac{1}{N}<\varepsilon$, setting 
$I^N_i:=[\frac{i-1}{N},\frac{i}{N})$ for every $i \in \{1,\ldots,N-1\}$  as well as 
$I_n:=[\frac{N-1}{N},1]$ we have 
$$
\lim_{n \rightarrow \infty} \sum_{i=1}^N \lambda \left(h_n^{-1}(I^N_i) \, \Delta \, h^{-1}(I^N_i) \right)=0.
$$
Defining $\Lambda_n^N:=\bigcup_{i=1}^N \left(h_n^{-1}(I^N_i) \, \Delta \, h^{-1}(I^N_i)\right)$
this yields $\lim_{n \rightarrow \infty}\lambda(\Lambda_n^N)=0$. Since 
for every $x \in [0,1]$ fulfilling $|h_n(x)-h(x)|>\varepsilon$ we have that $x \in \Lambda_n^N$, it 
altogether follows that 
$$
\lim_{n \to \infty}  \lambda \left(\{x \in [0,1]: |h_n(x)-h(x)| > \varepsilon\}\right) 
\leq \lim_{n \to \infty} \lambda(\Lambda_n^N)=0.
$$
Considering that $\varepsilon>0$ was arbitrary, we obtain that 
$(h_n)_{n \in \mathbb{N}}$ converges in probability to $h$, and the proof is complete.
\end{proof}
Even though transformations $g,h \in \mathcal{T}^*$ fulfilling    
$\Vert f-g \Vert_1 \neq \Vert f^{-1}-g^{-1} \Vert_1$ exist (and are easy to construct), 
$L_1$-convergence in $\mathcal{T}^*$ is equivalent to $L_1$ convergence of the corresponding inverses
- the following lemma holds. 
\begin{lem}
For all $h,h_1,h_2,\ldots \in \mathcal{T}$ the following three assertions are equivalent:
\begin{enumerate}
\item $\lim_{n \to \infty} \Vert h_n-h \Vert_1=0$.
\item $\lim_{n \to \infty} \Vert h_n^{-1}-h^{-1} \Vert_1=0$.
\item For every $B \in \mathcal{B}([0,1])$ we have  
$
\lim_{n \rightarrow \infty} \lambda \left(h_n^{-1}(B) \Delta h^{-1}(B) \right)=0.
$
\item For every $B \in \mathcal{B}([0,1])$ we have  
$
\lim_{n \rightarrow \infty} \lambda \left(h_n(B) \Delta h(B) \right)=0.
$
\end{enumerate}
\end{lem}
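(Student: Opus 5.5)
The statement involves inverses $h_n^{-1},h^{-1}$, so I read it as concerning $h,h_1,h_2,\ldots\in\mathcal{T}^*$. For a $\lambda$-preserving bijection, the inverse is again $\lambda$-preserving; one checks this via $\lambda(h(B))=\lambda(h^{-1}(h(B)))=\lambda(B)$, using that $h$ is bijective. Hence $h^{-1}\in\mathcal{T}^*$ and $(h^{-1})^{-1}(B)=h(B)$ for every Borel set $B$.

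The plan is to reduce everything to Lemma \ref{lem:L1.in.prob}, applied twice. Applied to the sequence $(h_n)$ and its limit $h$, that lemma gives (1)$\Leftrightarrow$(3). Applied to $(h_n^{-1})$ and $h^{-1}$, which lie in $\mathcal{T}$, it gives that $\Vert h_n^{-1}-h^{-1}\Vert_1\to 0$ holds if and only if $\lambda\big((h_n^{-1})^{-1}(B)\,\Delta\,(h^{-1})^{-1}(B)\big)\to 0$ for all $B$. Since $(h^{-1})^{-1}(B)=h(B)$, this is exactly (2)$\Leftrightarrow$(4). It therefore remains to link the two pairs, and the natural bridge is (3)$\Leftrightarrow$(4).

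For (3)$\Rightarrow$(4), fix $B$ and put $A_n:=h_n(B)$, $A:=h(B)$. Since $h$ is a measure-preserving bijection,
$$\lambda(A_n\Delta A)=\lambda\big(h^{-1}(A_n)\,\Delta\, h^{-1}(A)\big)=\lambda\big(h^{-1}(h_n(B))\,\Delta\, B\big).$$
Writing $B=h_n^{-1}(h_n(B))$ and inserting the intermediate set $h_n^{-1}(h(B))$ via the triangle inequality for $\Delta$ bounds the right-hand side by
$$\lambda\big(h^{-1}(A_n)\,\Delta\, h_n^{-1}(A_n)\big)+\lambda\big(h_n^{-1}(A_n)\,\Delta\, h_n^{-1}(A)\big).$$
The second term equals $\lambda(A_n\Delta A)$ again, so this estimate is circular and cannot be used as is.

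The fix is to make the convergence in (3) uniform over approximating sets. I first reduce to $B$ being a finite union of dyadic intervals: by measure preservation, $\lambda(h_n(B)\Delta h_n(U))=\lambda(B\Delta U)$, so approximation errors are uniform in $n$. The key observation is then the identity
$$\lambda\big(h_n(I)\cap h(J)\big)=\lambda\big(I\cap h_n^{-1}(h(J))\big),$$
whose right-hand side, by (3) applied to the fixed set $h(J)$, converges to $\lambda\big(I\cap h^{-1}(h(J))\big)=\lambda(I\cap J)$. Taking $J=I$ gives $\lambda(h_n(I)\cap h(I))\to\lambda(I)$. Since $\lambda(h_n(I))=\lambda(h(I))=\lambda(I)$, it follows that $\lambda(h_n(I)\Delta h(I))=2\lambda(I)-2\lambda(h_n(I)\cap h(I))\to 0$. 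This step in fact works for every Borel $I$, so no approximation is needed at all, and it is the key step.

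The converse (4)$\Rightarrow$(3) follows by the symmetric argument with the roles of $h$ and $h^{-1}$ swapped: $\lambda(h_n^{-1}(B)\cap h^{-1}(B))=\lambda(h_n(h^{-1}(B))\cap B)$, which tends to $\lambda(h(h^{-1}(B))\cap B)=\lambda(B)$ by (4) applied to the set $h^{-1}(B)$. The main obstacle I anticipate is measurability of $h(B)$ for a Borel bijection. This is guaranteed because $h^{-1}$ is Borel, by Lusin–Souslin, and otherwise I would work modulo null sets with equivalence classes as the paper does.
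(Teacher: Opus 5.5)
Your proof is correct and follows the paper's route. Lemma~\ref{lem:L1.in.prob} gives (1)$\Leftrightarrow$(3) directly and (2)$\Leftrightarrow$(4) when applied to $h_n^{-1},h^{-1}$, and your intersection identity for the bridge (3)$\Leftrightarrow$(4) is the paper's identity $\lambda(h_n(B)\,\Delta\,h(B))=\lambda\bigl(h_n^{-1}(h(B))\,\Delta\,h^{-1}(h(B))\bigr)$ in another form, so the circular first attempt and the dyadic reduction can simply be deleted.
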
 
\begin{proof}
Let $B \in \mathcal{B}([0,1])$ be arbitrary but fixed and set $A=h(B)\in \mathcal{B}([0,1])$. 
Then using the fact that $h_n,h$ and their inverses are $\lambda$-preserving, it follows that
\begin{align*}
\lambda \left(h_n(B) \,\Delta \, h(B) \right)&=\lambda \left(\left(h_n^{-1}\right)^{-1}(h^{-1}(A)) 
\,\Delta \, \left(h_n^{-1}\right)^{-1} h_n^{-1}(A) \right) \\
&= \lambda \left(h^{-1}(A) \, \Delta \, h_n^{-1}(A) \right),
\end{align*}
so $\lim_{n \rightarrow \infty} \lambda \left(h_n(B) \,\Delta \, h(B) \right)=0$ if and only if
$\lim_{n \rightarrow \infty} \lambda \left(h_n^{-1}(A) \, \Delta \, h^{-1}(A)\right)=0$.
This shows that the third and the fourth assertion are equivalent, which, using 
Lemma \ref{lem:L1.in.prob} completes the proof.   
\end{proof}
Summing up, we can prove the following main result of this section:
\begin{thm}
$(\cdb^{mcd},D_1)$ and $(\mathcal{T}^*,\Vert \cdot \Vert_1)$ are homeomorphic to $(\ell_2,\Vert \cdot \Vert_2)$. 
\end{thm}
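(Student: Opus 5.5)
The plan is to combine Nhu's result with the preceding two lemmas, and then to transfer the statement from $\mathcal{T}^*$ to $\cdb^{mcd}$ via the map $h \mapsto C_h$, where $C_h$ denotes the copula with Markov kernel $K_{C_h}(x,E)=\mathbf{1}_E(h(x))$.

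\textbf{Step 1: $(\mathcal{T}^*,\Vert \cdot \Vert_1)$.} According to \cite{Nhu}, $\mathcal{T}^*$ endowed with the weak topology is homeomorphic to $(\ell_2,\Vert \cdot \Vert_2)$. Weak convergence is characterized by the condition in eq. \eqref{eq:weak}. By the preceding lemma (its assertions (1) and (4)), this condition is equivalent to $L_1$-convergence.

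To turn this sequential statement into an equality of topologies, I note that $L_1$ induces a metrizable topology. The weak topology on $\mathcal{T}^*$ is metrizable as well, being homeomorphic to $\ell_2$. For two metrizable topologies on the same set, coinciding convergent sequences force the topologies to coincide. Hence the identity map is a homeomorphism from $(\mathcal{T}^*,\Vert\cdot\Vert_1)$ onto $\mathcal{T}^*$ with the weak topology, and $(\mathcal{T}^*,\Vert\cdot\Vert_1)\cong\ell_2$.

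\textbf{Step 2: $(\cdb^{mcd},D_1)$.} Consider $\Phi:\mathcal{T}^*\to\cdb^{mcd}$, $\Phi(h)=C_h$. By definition of mutual complete dependence, $\Phi$ is surjective onto $\cdb^{mcd}$. It is injective on $\lambda$-equivalence classes, since the Markov kernel is unique $\lambda$-a.e. The key computation is
$$D_1(C_g,C_h)=\int_{[0,1]}\int_{[0,1]}\big|\mathbf{1}_{[0,y]}(g(x))-\mathbf{1}_{[0,y]}(h(x))\big|\,d\lambda(y)\,d\lambda(x).$$
For fixed $x$, the inner integral equals the $\lambda$-measure of the set of $y$ lying between $g(x)$ and $h(x)$, which is $|g(x)-h(x)|$. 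Therefore $D_1(C_g,C_h)=\Vert g-h\Vert_1$, so $\Phi$ is an isometry onto $\cdb^{mcd}$ and in particular a homeomorphism. This makes the second claim immediate.

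The main obstacle is Step 1. It consists in matching the topology used in \cite{Nhu}, stated in terms of convergence of the form \eqref{eq:weak}, with the $L_1$ topology. The sequential-equivalence argument for metrizable topologies resolves this, and the isometry in Step 2 is then routine.
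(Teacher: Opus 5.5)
Your proposal is correct and follows the paper's proof. Like the paper, you combine Nhu's theorem with the preceding lemmas, which identify weak convergence with $L_1$-convergence on $\mathcal{T}^*$, and transfer the result via the surjective isometry $h\mapsto C_h$; the paper cites \cite{p06} for that isometry, whereas you verify $D_1(C_g,C_h)=\Vert g-h\Vert_1$ directly and also spell out the metrizability argument that the paper leaves implicit.

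The one step you gloss is surjectivity onto $\cdb^{mcd}$, which is not literally \emph{by definition}, since $\cdb^{mcd}$ is defined by requiring $C^t$ to be completely dependent. It is, however, the standard fact from \cite{p06} that this condition forces $h$ to agree $\lambda$-a.e.\ with an element of $\mathcal{T}^*$.
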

\begin{proof}
Define $\alpha: (\mathcal{T}^*,\Vert \cdot \Vert_1) \to (\cdb^{mcd},D_1)$ as the mapping assigning each 
$h \in \mathcal{T}^*$ its corresponding mutually completely dependent copula $C_h \in \cdb^{mcd}$. 
According to \cite{p06}, $\alpha$ is a surjective isometry, hence $\alpha$ is a homeomorphism.
Using the previous lemmas and the main result in \cite{Nhu}, it therefore follows that 
$(\cdb^{mcd},D_1)$ and $(\mathcal{T}^*,\Vert \cdot \Vert_1)$ are homeomorphic to 
$(\ell_2,\Vert \cdot \Vert_2)$. 
\end{proof}
Again using the fact that no, non-degenerate open ball in $(\ell,\Vert \cdot \Vert_2)$ 
has compact closure, the following corollary is immediate:
\begin{cor}
$(\cdb^{mcd},D_1)$ is nowhere locally compact.
\end{cor}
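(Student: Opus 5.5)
The claim is that $(\cdb^{mcd},D_1)$ is nowhere locally compact. I would transport the question to $\ell_2$ through the homeomorphism from the previous theorem. Local compactness at a point is a topological property, so it suffices to show that $(\ell_2,\Vert\cdot\Vert_2)$ has no point with a compact neighbourhood.

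\textbf{Step 1: reduction.} Let $\Phi:(\cdb^{mcd},D_1)\to(\ell_2,\Vert\cdot\Vert_2)$ be the homeomorphism. Suppose some $C\in\cdb^{mcd}$ has a neighbourhood $V$ with compact closure $\overline V$ in $\cdb^{mcd}$. Then $\Phi(V)$ is a neighbourhood of $\Phi(C)$, and $\Phi(\overline V)$ is compact, being the continuous image of a compact set. Since $\Phi(V)\subseteq\Phi(\overline V)$, the point $\Phi(C)$ would have a compact neighbourhood in $\ell_2$.

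\textbf{Step 2: no compact neighbourhoods in $\ell_2$.} Suppose $\mathbf{x}\in\ell_2$ has a compact neighbourhood. That neighbourhood contains some closed ball $\overline B(\mathbf{x},r)$ with $r>0$. This ball is closed in a compact set, hence compact. But the points $\mathbf{x}+r\mathbf{e}_k$, where $\mathbf{e}_k$ are the unit vectors, all lie in the ball and satisfy $\Vert r\mathbf{e}_k-r\mathbf{e}_j\Vert_2=r\sqrt2$ for $k\neq j$. So this sequence has no convergent subsequence, contradicting compactness. Hence no point of $\ell_2$, and therefore no point of $\cdb^{mcd}$, has a compact neighbourhood.

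\textbf{Main obstacle.} There is no real difficulty. The only point needing care is that local compactness must be phrased in terms of neighbourhoods, not $D_1$-balls, since $\Phi$ need not map balls to balls. If one wants the statement for closed $D_1$-balls in $\cdb^{mcd}$, as in Corollary \ref{cor:not.locally.compact}, note that any closed ball $\overline B(C,r)$ with $r>0$ is a neighbourhood of $C$. If it were compact, Step 1 would apply to it directly.
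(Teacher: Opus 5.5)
Your proof is correct and takes the same route as the paper. The paper likewise deduces the corollary directly from the homeomorphism $(\cdb^{mcd},D_1)\cong(\ell_2,\Vert\cdot\Vert_2)$ and the fact that no non-degenerate ball in $\ell_2$ has compact closure; you simply verify that fact explicitly using the points $\mathbf{x}+r\mathbf{e}_k$.
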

We conclude this section by showing that $(\cdb^{cd},D_1)$ is topologically very small in 
$(\cdb,D_1)$ - notice that, contrary to $(\cdb^{cd},D_1)$, the set $(\cdb^{mcd},D_1)$ is not closed 
and therefore can not be a $Z$-set in $(\cdb,D_1)$. 
\begin{thm}
$\cdb^{cd}$ is a $Z$-set in $(\cdb,D_1)$. 
\end{thm}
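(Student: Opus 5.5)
We need two things: that $\cdb^{cd}$ is closed in $(\cdb,D_1)$, and that continuous maps exist which move every copula off $\cdb^{cd}$ by less than a prescribed continuous tolerance $\varepsilon$.

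\textbf{Closedness.} Via the isometry $\kappa$ from Lemma \ref{lem:D1AR}, we work with the functions $H_C(x,y)=K_C(x,[0,y])$. We use the characterization from \cite{p06}: $C$ is completely dependent if and only if $D_1(C,\Pi)=\tfrac13$, the maximal possible value. The map $C\mapsto D_1(C,\Pi)$ is $1$-Lipschitz, so $\cdb^{cd}=\{C: D_1(C,\Pi)=\tfrac13\}$ is closed.

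If we prefer not to rely on that characterization, there is a direct argument. $C$ is completely dependent if and only if $H_C(x,y)\in\{0,1\}$ for $\lambda^2$-a.e. $(x,y)$, i.e. $\int\int H_C(1-H_C)\,d\lambda\,d\lambda=0$. The functional $H\mapsto \int\int H(1-H)$ is continuous on $L_1$ for functions with values in $[0,1]$, so its zero set is closed.

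\textbf{Moving off $\cdb^{cd}$.} We reuse the gluing construction $\mathcal{G}_a(A,B)$ from the proof of Theorem \ref{thm:main.D1.full}. Given a continuous $\varepsilon:\cdb\to(0,1)$, we define
$$
f(C):=\mathcal{G}_{1-\frac{\varepsilon(C)}{4}}(C,\Pi).
$$
The estimate $I+II\le 3(1-a)$ from that proof then gives $D_1(f(C),C)\le \tfrac34\varepsilon(C)<\varepsilon(C)$.

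To see that $f(C)\notin\cdb^{cd}$, we look at eq. \eqref{eq:glue}. For $y\in(a,1)$ we have
$$
K_{f(C)}(x,[0,y])=a+(1-a)\tfrac{y-a}{1-a}\in(a,1),
$$
which is strictly between $0$ and $1$ on a set of positive $\lambda^2$-measure. So $f(C)$ is not completely dependent.

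\textbf{Continuity of $f$ (the main obstacle).} We must show that $(a,A)\mapsto \mathcal{G}_a(A,\Pi)$ is jointly continuous from $(0,1)\times(\cdb,D_1)$ to $(\cdb,D_1)$; composing with the continuous map $C\mapsto(1-\varepsilon(C)/4,C)$ then gives continuity of $f$.

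We work with $\kappa$ and the triangle inequality, handling the two arguments separately.

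\emph{Dependence on $A$ for fixed $a$.} By the change of variables $v=y/a$,
$$
D_1(\mathcal{G}_a(A,\Pi),\mathcal{G}_a(A',\Pi))=a^2D_1(A,A').
$$
This is Lipschitz in $A$.

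\emph{Dependence on $a$ for fixed $A$.} We need $\int\int |aH_A(x,y/a)-a'H_A(x,y/a')|\to0$ as $a'\to a$, together with the analogous statement for the explicit upper part. Continuity of dilations in $L_1$ handles this: approximate $H_A$ in $L_1([0,1]^2)$ by a continuous function and control the error uniformly, since the Jacobians are bounded for $a$ in compact subsets of $(0,1)$.

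The upper part, where the glued block is $\Pi$, is an explicit continuous function of $(a,y)$ and causes no difficulty. Combining the Lipschitz bound in $A$ with continuity in $a$ yields joint continuity, which completes the proof that $\cdb^{cd}$ is a $Z$-set in $(\cdb,D_1)$.
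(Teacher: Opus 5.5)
Your proposal is correct and follows the paper's proof: the same gluing $f(C)=\mathcal{G}_{1-\varepsilon(C)/4}(C,\cdot)$ with the same bound $I+II\le 3(1-a)$, except that you glue in $\Pi$ rather than $M$, which makes $f(C)\notin\cdb^{cd}$ immediate since $K_{f(C)}(x,[0,y])=y$ on the upper strip. Beyond that, you derive closedness from the characterization $D_1(C,\Pi)=\tfrac13$ (or from the $1$-Lipschitz functional $H\mapsto\int\int H(1-H)$) where the paper simply cites \cite{p06}, and you actually prove continuity of $f$ (Lipschitz constant $a^2$ in $C$, continuity of dilations in $a$), which the paper only asserts to be straightforward.
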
 
\begin{proof}
First of all, contrary to $\cdb^{mcd}$, according to \cite{p06} $(\cdb^{cd},D_1)$ is closed 
in $(\cdb,D_1)$.\\
Let $\varepsilon: (\cdb,D_1) \to (0,\infty)$ denote an arbitrary but fixed continuous function.  Defining the mapping $\varepsilon': (\cdb,D_1) \to (0,1]$ by $\varepsilon'(C)=\min(\varepsilon(C),1)$, 
obviously $\varepsilon'$ is continuous too. 
We reuse the gluing $\mathcal{G}_a$ according to eq. \eqref{eq:glue}, and define the mapping 
$f: \cdb \to \cdb$ by
$$
f(C):=\mathcal{G}_{1-\frac{\varepsilon'(C)}{4}}(C,M).
$$
Proceeding as in the proof of Theorem \ref{thm:main.D1.full} it is, firstly, straightforward to 
verify that the mapping $f$ is continuous w.r.t. $D_1$ and that, secondly, 
$D_1(f(C),C)<\varepsilon'(C)\leq \varepsilon(C)$ holds for every $C \in \cdb$. 
Since obviously $f$ maps $\cdb$ into $\cdb \setminus \cdb^{cd}$, this completes the proof.
\end{proof}

\subsection{Bivariate Extreme-Value Copulas}
We quickly recall the definition and properties of Extreme Value copulas. 
A copula $C \in \mathcal{C}$ is called Extreme Value copula (EVC) if there exists some copula $B \in \mathcal{C}$ such that
$$
C(x,y)  = \lim_{n \rightarrow \infty} B^n(x^\frac{1}{n},y^\frac{1}{n})
$$
for all $x,y \in [0,1]$. Throughout this section the space of all bivariate EVCs will be denoted by 
$\cdb^{ev}$. It is well-known (see \cite{DiTru,TSFS} and the original references 
therein) that $(\cdb^{ev},d_\infty)$ is a compact metric space and that every EVC  
corresponds to a unique Pickands dependence function, i.e., a convex function $A: [0,1] \to [0,1]$
fulfilling $\max\{1-x,x\} \leq A(x) \leq 1$ for every $x \in [0,1]$. 
Letting $\mathcal{A}$ denote the family of all Pickands dependence functions, the 
one-to-one correspondence between $\cdb^{ev}$ and $\mathcal{A}$ is established via
    \begin{equation}\label{eq:map_eq_pick_copula}
    C(x,y) = (xy)^{A\left(\frac{\log(x)}{\log(xy)}\right)}, \quad x,y \in (0,1).
    \end{equation}
Writing $C_A$ for the (unique) EVC induced by $A \in \mathcal{A}$, 
the afore-mentioned one-to-one correspondence is easily seen to be continuous, i.e., 
the mapping $\kappa: \mathcal{A} \to \cdb^{ev}$, defined by $\kappa(A)=C_A$, is a 
homeomorhism between $(\mathcal{A},\Vert \cdot \Vert_\infty)$ and $(\cdb^{ev},d_\infty)$. 
More importantly, $\kappa$ is even a homeomorphism with respect to $D_1$ - the following result 
(in which $A^+$ denotes the right-hand derivative of $A \in \mathcal{A}$) established in 
\cite{KFT} holds: 
\begin{thm}[\cite{KFT}]\label{thm:wcc.evc}
Let $C, C_1, C_2, \ldots$ be Extreme Value copulas with Pickands dependence functions $A, A_1, A_2, \ldots$, respectively. Then the following assertions are equivalent:
\begin{enumerate}
    \item[(a)] $\lim_{n\to\infty} C_n(x,y) = C(x,y)$ for all $x, y \in [0,1]$,
    \item[(b)] $\lim_{n\to\infty} A_n(x) = A(x)$ for all $x \in [0,1]$,
    \item[(c)] $\lim_{n\to\infty} D^+A_n(x) = D^+A(x)$ for all continuity points $x$ of $A^+$,
    \item[(d)] $\lim_{n\to\infty} D_1(C_n, C) = 0$,
    \item[(e)] For $\lambda$-almost every $x \in (0,1)$ we have that 
    $K_{C_n}(x,\cdot) \xrightarrow{\ weakly\ } K_C(x,\cdot)$ for $n \to \infty$.
\end{enumerate}
\end{thm}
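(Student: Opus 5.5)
The plan is to prove the cycle (a)$\Leftrightarrow$(b)$\Leftrightarrow$(c)$\Rightarrow$(e)$\Rightarrow$(d)$\Rightarrow$(a). Two ingredients drive it: the explicit formula \eqref{eq:map_eq_pick_copula}, and an explicit version of the Markov kernel of an EVC in terms of $A$ and $A^+$.

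\textbf{(a)$\Leftrightarrow$(b).} For (b)$\Rightarrow$(a), pointwise convergence of $A_n$ plugged into \eqref{eq:map_eq_pick_copula} gives $C_n(x,y)\to C(x,y)$ on $(0,1)^2$. On the boundary all copulas agree, so nothing more is needed there. For (a)$\Rightarrow$(b), fix $t\in(0,1)$ and choose $x,y\in(0,1)$ with $\log x/\log(xy)=t$. Then
\[
A_n(t)=\frac{\log C_n(x,y)}{\log(xy)}\to A(t),
\]
and $A_n(0)=A_n(1)=1$ takes care of the endpoints.

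\textbf{(b)$\Leftrightarrow$(c).} This is a statement about convex functions on $[0,1]$ whose right derivatives lie in $[-1,1]$. For (b)$\Rightarrow$(c), I will use the standard fact that pointwise convergence of convex functions implies $D^+A_n(x)\to D^+A(x)$ at every point $x$ where $A$ is differentiable, i.e.\ at every continuity point of $A^+$. The proof sandwiches the right derivative between difference quotients: for $h>0$,
\[
\frac{A_n(x)-A_n(x-h)}{h}\le D^+A_n(x)\le \frac{A_n(x+h)-A_n(x)}{h}.
\]
Let $n\to\infty$ first and then $h\downarrow 0$. For (c)$\Rightarrow$(b), write $A_n(x)=1+\int_0^x D^+A_n(s)\,ds$. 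The derivatives are bounded by $1$ and converge outside a countable set, so dominated convergence finishes.

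\textbf{(c)$\Rightarrow$(e)$\Rightarrow$(d)$\Rightarrow$(a).} Differentiate \eqref{eq:map_eq_pick_copula} in $x$ and use $A^+$. With $t=\log x/\log(xy)$ this yields the version
\[
K_C(x,[0,y])=\frac{C(x,y)}{x}\Big(A(t)+(1-t)A^+(t)\Big).
\]
This is a legitimate Markov kernel: it is right-continuous and monotone in $y$ (by convexity of $A$), and it equals $\partial_1C$ almost everywhere.

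For fixed $x\in(0,1)$, the map $y\mapsto t$ is a homeomorphism of $(0,1)$ onto $(0,1)$. Hence by (b) and (c), $K_{C_n}(x,[0,y])\to K_C(x,[0,y])$ for all $y$ outside the countable preimage of the discontinuity set of $A^+$. Convergence of distribution functions on a dense set gives weak convergence, which is (e), in fact for every $x$.

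For (e)$\Rightarrow$(d), weak convergence gives convergence of $K_{C_n}(x,[0,y])$ for all but countably many $y$, for a.e.\ $x$. Since all integrands are bounded by $1$, dominated convergence on $[0,1]^2$ yields $D_1(C_n,C)\to0$.

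For (d)$\Rightarrow$(a), use $C(x,y)=\int_0^x K_C(s,[0,y])\,d\lambda(s)$. This gives
\[
\int_0^1|C_n(x,y)-C(x,y)|\,d\lambda(y)\le D_1(C_n,C),
\]
so $C_n\to C$ in $L_1([0,1]^2)$. Copulas are uniformly $2$-Lipschitz, so $L_1$-convergence upgrades to uniform convergence.

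\textbf{Main obstacle.} I expect the hardest part to be the step (c)$\Rightarrow$(e). One must check carefully that the formula with $A^+$ really is a version of the kernel, i.e.\ that it is a distribution function in $y$ with the correct boundary behaviour as $y\to0$ and $y\to1$. One must also track the exceptional countable sets when passing from convergence of $D^+A_n$ to weak convergence of the kernels. The convex-analysis lemma in (b)$\Rightarrow$(c) is standard but should be stated explicitly.
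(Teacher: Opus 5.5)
The paper gives no proof of this theorem: it is quoted from \cite{KFT} and used only as a black box, to get that $\kappa:(\mathcal{A},\Vert\cdot\Vert_\infty)\to(\cdb^{ev},D_1)$ is a homeomorphism. There is therefore no in-paper route to compare with. Judged on its own, your cycle (a)$\Rightarrow$(b)$\Rightarrow$(c)$\Rightarrow$(e)$\Rightarrow$(d)$\Rightarrow$(a) is a correct, self-contained proof.

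The steps (e)$\Rightarrow$(d)$\Rightarrow$(a) hold for arbitrary copulas, and (a)$\Leftrightarrow$(b)$\Leftrightarrow$(c) is pure convex analysis. All the EVC-specific content therefore sits in (c)$\Rightarrow$(e), via the kernel representation $K_C(x,[0,y])=\frac{C(x,y)}{x}\bigl(A(t)+(1-t)A^+(t)\bigr)$ (cf.\ \cite{TSFS}). The checks you list do go through:
\begin{enumerate}
\item The second factor is nonnegative because $A(t)\ge 1-t$ and $A^+\ge -1$.
\item It is nondecreasing in $t$: for $s<t$, convexity gives $A(t)+(1-t)A^+(t)-A(s)-(1-s)A^+(s)\ge (1-t)\bigl(A^+(t)-A^+(s)\bigr)\ge 0$.
\item Since $y\mapsto t$ is increasing, this gives monotonicity in $y$. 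Right-continuity of $A^+$ then gives right-continuity in $y$.
\end{enumerate}
Note that $x\mapsto t$ is \emph{decreasing}, so the one-sided derivative $\partial_x^+C$ actually involves the left derivative $A^-$. This is harmless: for each fixed $y$ the two derivatives agree off a countable set of $x$, which is all you need for $C(x,y)=\int_0^x K_C(s,[0,y])\,d\lambda(s)$. Finally, only countably many copulas are involved, so (e) does not depend on the chosen kernel versions. This justifies working with your explicit ones.

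One caveat concerns the statement rather than your argument: (c) has to be read for continuity points $x\in(0,1)$. Since $A^+$ is right-continuous at $0$, the point $x=0$ is formally a continuity point. Yet for $A_n(t)=\max\{1-t,\,1-\tfrac1n,\,t\}$ and $A\equiv 1$ one has $\Vert A_n-A\Vert_\infty=\tfrac1n\to 0$ while $D^+A_n(0)=-1\not\to 0=D^+A(0)$. Your sandwich argument for (b)$\Rightarrow$(c) only works at interior points anyway. Both (c)$\Rightarrow$(b) and (c)$\Rightarrow$(e) also use only interior points, so with this reading your proof is complete.
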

Following \cite{Pick}, every Pickands dependence function $A$ can be further be 
identified with a so-called spectral measure $\nu$ on the unit simplex. 
Projecting $\nu$ onto $[0,1]$, according to \cite{DiTru}, $A$ corresponds to a unique
probability measure $\vartheta \in \mathcal{P}([0,1])$ fulfilling 
$$
\int_{[0,1]} x d\vartheta(x)= \tfrac{1}{2}.
$$
We will let $\mathcal{P}_\mathcal{A}$ denote the family of all these measures and refer to them 
as Pickands dependence measures.  
The one-to-one correspondence between $\mathcal{A}$ and $\mathcal{P}_\mathcal{A}$ is established via 
the identity 
\begin{equation}\label{eq:id_pick_meas}
A(t) = 1-t + 2\int_{[0,t]}\vartheta([0,z]) \mathrm{d}\lambda(z),\quad t \in [0,1].
\end{equation}
Considering the Wasserstein metric $m_W$ on $\mathcal{P}_\mathcal{A}$, 
it is straightforward to verify that the corresponding mapping 
$\Upsilon: (\mathcal{P}_\mathcal{A},m_W) \to (\mathcal{A},\Vert \cdot \Vert_\infty)$ is a 
homeomorphism that preserves convex combinations.

The following lemma will be key for proving the main result of this section. 
\begin{lem}\label{lem:pick}
$\mathcal{A}$ is a compact, convex subset of $(C([0,1]),\Vert \cdot \Vert_\infty)$ containing
 infinitely many linearly independent elements. 
Moreover, $(\mathcal{A},\Vert \cdot \Vert_\infty)$ is homeomorphic to the Hilbert cube 
$(\mathcal{H},\rho)$.
\end{lem}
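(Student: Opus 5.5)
The plan is to verify convexity, compactness and infinite dimensionality of $\mathcal{A}$ directly, and then to obtain the homeomorphism with $\mathcal{H}$ from Theorem \ref{thm:keller}. That theorem applies because $\mathcal{A}$ is a convex subset of the normed space $(C([0,1]),\Vert \cdot \Vert_\infty)$.

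\textbf{Convexity.} If $A_1,A_2\in\mathcal{A}$ and $s\in[0,1]$, then $sA_1+(1-s)A_2$ is again convex. Moreover, the inequalities $\max\{1-x,x\}\le A_i(x)\le 1$ are preserved under convex combinations. So $\mathcal{A}$ is convex.

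\textbf{Compactness.} I would use Arzel\`a--Ascoli. Every $A\in\mathcal{A}$ is convex and squeezed between $\max\{1-x,x\}$ and $1$. Hence $A(0)=A(1)=1$, and the one-sided derivatives satisfy $-1\le A^+(x)\le 1$; this follows because a convex function with $A(0)=1$ and $A\ge 1-x$ has right derivative $\ge -1$ at $0$, and similarly at $1$. Therefore $\mathcal{A}$ is uniformly bounded and uniformly $1$-Lipschitz, hence relatively compact. It is also closed, since convexity and the pointwise bounds survive uniform limits. Alternatively, compactness can be transferred via $\Upsilon$ from the compactness of $\mathcal{P}_\mathcal{A}$ in $(\mathcal{P}([0,1]),m_W)$: the mean constraint $\int x\,d\vartheta=\tfrac12$ is weakly closed and $\mathcal{P}([0,1])$ is compact.

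\textbf{Infinite dimensionality.} This is the step needing an explicit construction. I would use \eqref{eq:id_pick_meas}: for every $\vartheta\in\mathcal{P}_\mathcal{A}$ we get an $A\in\mathcal{A}$. For $k\ge 2$ take $\vartheta_k=\tfrac12\delta_{1/k}+\tfrac12\delta_{1-1/k}$, which has mean $\tfrac12$. Its Pickands function $A_k$ is piecewise linear with kinks exactly at $1/k$ and $1-1/k$. Suppose $\sum_{k}c_kA_k=0$ is a finite vanishing combination. Evaluating second distributional derivatives, or equivalently comparing kink locations, gives that each $c_k$ must vanish, because the point $1/k$ is a kink only of $A_k$. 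So $\{A_k\}$ is linearly independent.

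\textbf{Dimension and conclusion.} Theorem \ref{thm:keller} needs $\operatorname{ind}\mathcal{A}=\infty$, not just infinitely many independent elements; this is the main obstacle. By convexity, for every $n$ the set $\mathcal{A}$ contains the convex hull of $n+1$ affinely independent points $A_2,\dots,A_{n+2}$. That hull is a topological $n$-simplex in $C([0,1])$, so $\operatorname{ind}\mathcal{A}\ge n$ by monotonicity of dimension for subspaces of separable metric spaces. Hence $\operatorname{ind}\mathcal{A}=\infty$. Affine independence follows from linear independence of the $A_k$, since each $A_k$ is nonzero and the family is linearly independent. Theorem \ref{thm:keller} then yields that $(\mathcal{A},\Vert\cdot\Vert_\infty)$ is homeomorphic to $(\mathcal{H},\rho)$.
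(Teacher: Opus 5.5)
Your proposal is correct and follows essentially the paper's route: the paper also uses the two-point Pickands dependence measures $\tfrac12(\delta_a+\delta_{1-a})$ (you specialize to $a=1/k$) to exhibit infinitely many linearly independent elements, and then invokes Theorem \ref{thm:keller}. Your version is more complete than the paper's. The paper takes convexity and compactness of $\mathcal{A}$ for granted, proves independence only for the measures rather than for the functions $A$ (which you handle via kinks and second derivatives), and leaves implicit the step from linear independence to $\operatorname{ind}\mathcal{A}=\infty$, which you justify with embedded $n$-simplices.
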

\begin{proof}
Showing that $\mathcal{A}$ contains infinitely many linearly independent elements 
can be done as follows:  
For every $a \in (0,\frac{1}{2})$ define the measure 
$\vartheta_a:=\frac{1}{2}(\epsilon_a + \epsilon_{1-a}) \in \mathcal{P}_\mathcal{A}$.
Obviously this family fulfills that for $a\neq b$ the supports of $\vartheta_a$ and $\vartheta_b$ 
are disjoint. 
As a direct consequence, the family $\{\vartheta_a: a \in (0,\frac{1}{2})\}$ is linearly 
independent in the 
vector space $\mathcal{S}([0,1])$ of all finite signed measures on $\mathcal{B}([0,1])$. 
In fact, if for $0<a_1<a_2 < \ldots < a_n < \frac{1}{2}$ there exist some 
constants $e_1,e_2,\ldots,e_n \in \mathbb{R}$ such that 
$$
\vartheta(G)=\sum_{i=1}^n e_i \vartheta_{a_i}(A)= 0
$$
for every $G \in \mathcal{B}([0,1])$ then, using disjointness of the supports
it follows immediately that $e_1=e_2=\ldots=e_n=0$. \\
Having this, applying Theorem \ref{thm:keller} yields that 
$(\mathcal{A},\Vert \cdot \Vert_\infty)$ is homeomorphic to $(\mathcal{H},\rho)$.  
\end{proof}

\begin{thm}
$(\cdb^{ev},D_1)$ is homeomorphic to the Hilbert cube $(\mathcal{H},\rho)$. In particular, 
$(\cdb^{ev},D_1)$ is compact, an AR, and has the DC and the FP property. 
Moreover, $\cdb^{ev}$ is a $Z$-set in $(\cdb,D_1)$
\end{thm}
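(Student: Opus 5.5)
The plan has two parts. First transfer the Hilbert cube structure from $\mathcal{A}$ to $\cdb^{ev}$, then check the $Z$-set property.

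\textbf{Homeomorphism.} By Lemma \ref{lem:pick}, $(\mathcal{A},\Vert\cdot\Vert_\infty)$ is homeomorphic to $(\mathcal{H},\rho)$. It therefore suffices to show that $\kappa:(\mathcal{A},\Vert\cdot\Vert_\infty)\to(\cdb^{ev},D_1)$, $A\mapsto C_A$, is a homeomorphism.
\begin{itemize}
\item $\kappa$ is a bijection onto $\cdb^{ev}$ by the one-to-one correspondence \eqref{eq:map_eq_pick_copula}.
\item All spaces involved are metric, so continuity of $\kappa$ and of $\kappa^{-1}$ can be checked with sequences. For convex functions on $[0,1]$ sandwiched between $\max\{x,1-x\}$ and $1$, pointwise convergence is equivalent to uniform convergence: all such functions agree at the endpoints, and pointwise convergence of convex functions is uniform on compacts. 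This is the only point to verify, and it is routine. Hence $\Vert A_n-A\Vert_\infty\to0$ is equivalent to assertion (b) of Theorem \ref{thm:wcc.evc}, which in turn is equivalent to (d), i.e. $D_1(C_{A_n},C_A)\to0$.
\end{itemize}
This gives the homeomorphism $(\cdb^{ev},D_1)\cong(\mathcal{H},\rho)$. Compactness, the AR property, the DC property and the FP property then follow from the remarks after Theorem \ref{thm:keller}, since all four are invariant under homeomorphisms.

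\textbf{$Z$-set.} Closedness is immediate: $\cdb^{ev}$ is compact in $(\cdb,D_1)$ and hence closed. For the approximation property, let $\varepsilon:\cdb\to(0,\infty)$ be continuous and put $\varepsilon'=\min(\varepsilon,1)$, exactly as in the proof that $\cdb^{cd}$ is a $Z$-set. Define
$$f(C):=\mathcal{G}_{1-\frac{\varepsilon'(C)}{4}}(C,M).$$
\begin{itemize}
\item \emph{Closeness and continuity.} The estimate $I+II\le 3(1-a)$ from the proof of Theorem \ref{thm:main.D1.full} gives $D_1(f(C),C)<\varepsilon(C)$. Continuity of $f$ in $D_1$ follows from \eqref{eq:glue}: the kernel of $f(C)$ depends continuously, in $L_1$, on the kernel of $C$ and on the parameter $a$, where $a$ depends continuously on $C$.
\item \emph{Avoidance of $\cdb^{ev}$.} It remains to show $f(C)\notin\cdb^{ev}$. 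By \eqref{eq:glue}, $\mu_{f(C)}$ puts mass $1-a>0$ on the strip $[0,1]\times(a,1]$. On that strip it is a scaled copy of $M$, namely the increasing segment from $(0,a)$ to $(1,1)$.
\end{itemize}

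The main obstacle is to exclude rigorously that a glued copula of this form is an EVC. I would first use max-stability, $C(x^t,y^t)=C(x,y)^t$ for all $t>0$, which every EVC satisfies. Evaluate $f(C)$ at points $(x,y)$ with $y$ close to $1$, say $y>a$, where by \eqref{eq:glue} the value $f(C)(x,y)$ is explicit in terms of $M$. Then choose $t$ so that $y^t$ drops below $a$, which moves the point into the rescaled-$C$ region, and show that the identity fails. Alternatively I would use that an EVC satisfies $C\ge\Pi$ (it is PQD) and test this near the segment: the $M$-piece there forces $f(C)(x,y)=a x+\ldots$ in a way incompatible with $C\ge \Pi$ for small $x$. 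Whichever of these two checks works out cleanly is the one I would use to finish.
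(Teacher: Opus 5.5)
Your homeomorphism part is the paper's argument (via $\kappa$, Lemma~\ref{lem:pick} and Theorem~\ref{thm:wcc.evc}). Your remarks on pointwise versus uniform convergence in $\mathcal{A}$ and on closedness of $\cdb^{ev}$ are correct and fill small details the paper skips.

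\textbf{The gap.} In the $Z$-set part, the step you call the main obstacle, $f(C)\notin\cdb^{ev}$, is never carried out, and one of your two candidate checks cannot work. Write $a=1-\varepsilon'(C)/4$. The gluing formula gives
\[
f(C)(x,y)=aC(x,y/a)\ \ (y\le a),\qquad f(C)(x,y)=ax+\min\{(1-a)x,\,y-a\}\ \ (y\ge a).
\]
Suppose $C\ge\Pi$. Then $aC(x,y/a)\ge xy$. For $y\ge a$, either $f(C)(x,y)=x\ge xy$ or $f(C)(x,y)-xy=(1-x)(y-a)\ge 0$. So $f(C)$ is PQD whenever $C$ is, for instance when $C=\Pi$ or $C$ is itself an EVC. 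The PQD test is therefore inconclusive exactly where you need it. Your claim that the $M$-piece is incompatible with $C\ge\Pi$ for small $x$ is false: there $f(C)(x,y)=x\ge xy$.

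\textbf{How to close it.} The max-stability route works, but only with a correct test point, which you leave unspecified. Below the segment, where $f(C)(x,y)=ax+y-a$, nothing forces a contradiction without information on $C$. Instead take $y\in(a,1)$ and $0<x\le (y-a)/(1-a)$, a point above the segment, so that $f(C)(x,y)=x$. If $f(C)$ were an EVC, then $f(C)(x^t,y^t)=x^t$ for all $t>0$. Choosing $t$ with $y^t\le a$ gives
\[
f(C)(x^t,y^t)=aC(x^t,y^t/a)\le a\,x^t<x^t,
\]
a contradiction. With this inserted, your proof is complete.

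\textbf{Comparison with the paper.} The paper uses the convex combination $\iota(C)=(1-\tfrac{\varepsilon'(C)}{2})C+\tfrac{\varepsilon'(C)}{2}W$. It excludes EVCs because they put no mass on the anti-diagonal, which relies on the characterization of atoms of $K_{C_A}(x,\cdot)$ through $\vartheta$ from \cite{DiTru,TSFS}. Your exclusion step is more elementary: it needs only $C(x^t,y^t)=C(x,y)^t$. The price is continuity. For $C\mapsto\mathcal{G}_{a(C)}(C,M)$ you need $L_1$-continuity in $a$ of the dilation $y\mapsto y/a$. For the paper's $\iota$, continuity is a direct triangle-inequality estimate.
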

\begin{proof}
Theorem \ref{thm:wcc.evc} implies that the afore-mentioned mapping 
$\kappa: (\mathcal{A},\Vert \cdot \Vert_\infty) \to (\cdb^{ev},D_1)$ 
is a homeomorphism, which directly yields that $(\cdb^{ev},D_1)$ is homeomorphic to the Hilbert 
cube $(\mathcal{H},\rho)$. Since AR, the DC and the FP property are direct consequences, it 
remains to prove that $\cdb^{ev}$ is a $Z$-set in $(\cdb,D_1)$. We will proceed in several steps
and let $\varepsilon: (\cdb,D_1) \to (0,\infty)$ denote an arbitrary but fixed continuous function. \\
\textbf{(S1)} Defining $\varepsilon': (\cdb,D_1) \to (0,1]$ by $\varepsilon'(C)=\min(\varepsilon(C),1)$, 
obviously $\varepsilon'$ is continuous too. 
Based on $\varepsilon'$ define the mapping: $\iota: \cdb \to \cdb$ by 
\begin{equation}\label{eq:iota}
\iota(C):=\left(1-\tfrac{\varepsilon'(C)}{2}\right)\,C + \tfrac{\varepsilon'(C)}{2}\,W.
\end{equation}
For showing that $\iota$ is continuous assume that $(C_n)_{n \in \mathbb{N}}$ is a sequence in 
$\cdb$ converging to $C \in \cdb$ w.r.t. $D_1$. Using the triangle inequality several times we obtain
\begin{align*}
D_1(\iota(C_n),\iota(C)) &\leq D_1(C_n,C) + \tfrac{1}{2}\vert \varepsilon'(C_n)-\varepsilon(C) \vert \\
& \quad + 
\tfrac{1}{2}\vert \varepsilon'(C_n)-\varepsilon(C)\vert \int_{[0,1]^2} K_{C_n}(x,[0,y]) d\lambda_2(x,y) \\
& \quad + \tfrac{\varepsilon'(C)}{2}\vert \int_{[0,1]^2} \vert K_{C_n}(x,[0,y]) - K_{C}(x,[0,y]) d\lambda_2(x,y) \\ 
& \leq \vert \varepsilon'(C_n)-\varepsilon(C)\vert\, + \, \tfrac{3}{2}D_1(C_n,C). 
\end{align*} 
Since the last two sums tend to $0$ for $n \to \infty$, continuity of $\iota$ follows. \\
\textbf{(S2)}  
We want to show that $\iota(C) \in \cdb \setminus \cdb^{ev}$ 
for every $C \in \cdb$. Setting 
$$
G:=\{(x,1-x): x \in [0,1]\}
$$ 
we obviously have $\mu_{\iota(C)}(G)>0$ for every $C \in \cdb$. No EVC, however, assigns
positive mass to $G$ - in fact, according to \cite{DiTru,TSFS} the Markov Kernel 
$K_{C_A}$ of every EVC $C_A$ with corresponding Pickands dependence measure $\vartheta$ 
fulfills the following equivalence for every $x,y \in (0,1)$:
\begin{align}
K_{C_A}(x,\{y\}) >0 \quad \Longleftrightarrow \quad t:=\tfrac{\ln(x)}{\ln(xy)} \text{ fulfills }
\vartheta(\{t\})>0.
\end{align} 
In particular, $K_{C_A}(x,\{1-x\}) >0 $ if and only if 
$\vartheta(\{\frac{\ln(x)}{\ln(x(1-x))}\})>0$. Considering that, firstly, 
$x \mapsto \frac{\ln(x)}{\ln(x(1-x))}$ is a strictly decreasing homeomorphism of $[0,1]$, and that, 
secondly, $\vartheta$ can at most have countably many point masses, it follows that 
there are at most countably many $x \in (0,1)$ with $K_{C_A}(x,\{1-x\}) >0 $. 
Using disintegration of $G$ therefore yields
\begin{align*}
\mu_{C_A}(G)&= \int_{[0,1]} K_{C_A}(x,G_x) d\lambda(x) = \int_{[0,1]} K_{C_A}(x,\{1-x\}) d\lambda(x)=0.
\end{align*}  
In other words, $\iota$ maps $\cdb$ into $\cdb \setminus \cdb^{ev}$. \\
\textbf{(S3)} As final step we need to show that $D_1(\iota(C),C)<\varepsilon'(C)$ holds for 
every $C \in \cdb$. Again using the triangle inequality and Fubini's theorem yields
\begin{align*}
D_1(\iota(C),C) &\leq \tfrac{\varepsilon'(C)}{2} \underbrace{\int_{[0,1]^2} K_{C}(x,[0,y]) d\lambda_2(x,y)}_{=\frac{1}{2}} + \tfrac{\varepsilon'(C)}{2} \underbrace{\int_{[0,1]^2} K_{W}(x,[0,y]) d\lambda_2(x,y)}_{=\frac{1}{2}} \\
&= \tfrac{2\varepsilon'(C)}{4}=\tfrac{\varepsilon'(C)}{2}< \varepsilon'(C).
\end{align*} 
Since $C \in \cdb$ was arbitrary this completes the proof. 
\end{proof}

\subsection{Spatially homogeneous copulas}
We quickly recall the definition and basic properties of the family of 
spatially homogeneous copulas. These copulas were motivated by \cite{Brown1966} in 
the context of Markov operators and studied in the (bivariate and multivariate) copula
setting in \cite{DuranteFernandezTrutschnig2020}. 
For every $a \in [0,1]$ we will let $r_a: [0,1] \to [0,1)$ denote the mapping (rotation), given by
$r_a(x)= (x+a)\text{mod1}$. Obviously we have $r_0=r_1$, moreover it is straightforward to 
verify that each $r_a$ is 
$\lambda$-preserving, i.e., that $\lambda^{r_a}=\lambda$ holds. 
Following \cite{DuranteFernandezTrutschnig2020} we will call a copula $C \in \cdb$ 
spatially homogeneous if and only if there exists some $\vartheta \in \mathcal{P}([0,1])$
such that 
\begin{align}
K(x,E) := \vartheta^{r_x}(E)= \vartheta\left(r_x^{-1}(E) \right), \quad x \in [0,1],\, 
E \in \mathcal{B}([0,1])
\end{align}
is a (version of the) Markov kernel of $C$. In order to indicate the dependence on $\vartheta$, in 
what follows we let $C_\vartheta \in \cdb$ denote the spatially homogeneous copula induced 
by $\vartheta$ and write $\cdb^{sh}$ for the family of all bivariate spatially homogeneous copulas. 
Figure \ref{fig:sh} depicts a sample of the spatially homogeneous copula $C_\beta$ with 
$\beta$ denoting the Beta-distribution with parameters $(2,3)$. 
It is straightforward to verify that considering $\vartheta=\lambda$ yields $C_\vartheta=\Pi$
and $\vartheta=\delta_{0}$ yields $C_\vartheta=M$. 
Obviously the family $\cdb^{sh}$ is convex - in order to show compactness in $(\cdb,D_1)$ 
we will use the subsequent lemma, the proof of which can be found in \cite{DFST2026}. 
\begin{figure}[htbp]
  \centering
  \includegraphics[width=0.7\textwidth]{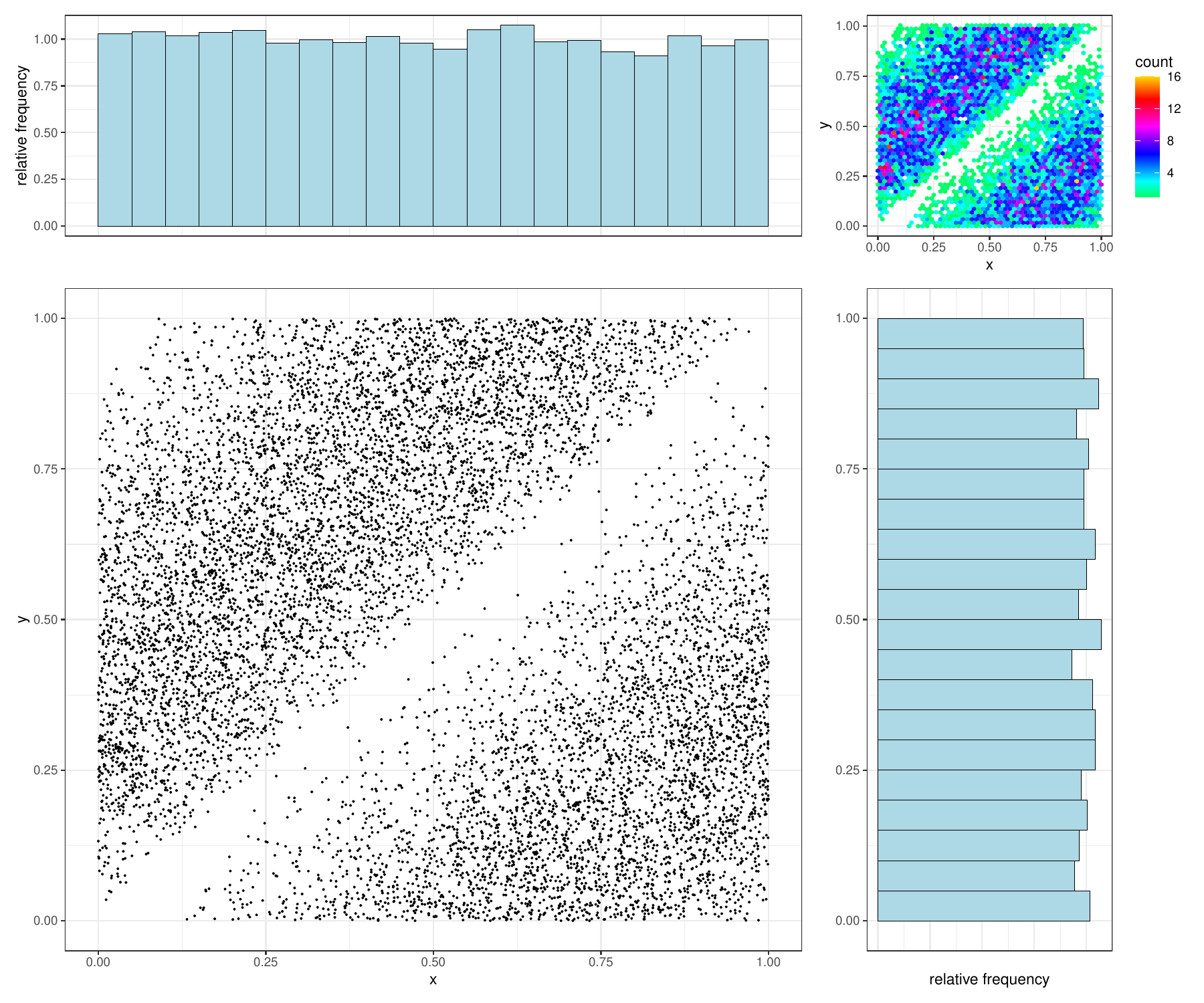}
  \caption{Sample of size $n=10.000$ of the spatially homogeneous copula 
  $C_\beta$ with $\beta$ denoting the Beta-distribution with parameters $(2,3)$ (upper
  left panel); corresponding marginal histograms (upper left and lower right panel)
  and heatmap of the sample (upper right panel).}
  \label{fig:sh}
\end{figure}
\begin{lem}\label{lem:weak.to.sh}
Suppose that $(\vartheta_n)_{n \in \mathbb{N}}$ is a sequence of probability measures in 
$\mathcal{P}([0,1])$ that converges weakly to $\vartheta \in \mathcal{P}([0,1])$. 
Then the corresponding induced spatially homogeneous copulas $C_{\vartheta_n}$ converge to 
$C_{\vartheta}$ with respect to $D_1$ for $n \to \infty$. 
\end{lem}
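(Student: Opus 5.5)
The plan is to write $D_1(C_{\vartheta_n},C_\vartheta)$ as an integral over $x$ of an $L_1$-distance between conditional distribution functions. We then show that, for each fixed $x$, the integrand tends to $0$, and conclude by dominated convergence.

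For fixed $x\in[0,1]$ we have $K_{C_\vartheta}(x,[0,y])=\vartheta(r_x^{-1}([0,y]))$. Since $r_x^{-1}=r_{1-x}$ (mod $1$), the set $r_x^{-1}([0,y])$ is, up to endpoints, the union of at most two intervals: $[1-x,\,1-x+y]$ when $y\le x$, and $[1-x,1]\cup[0,\,y-x]$ when $y>x$. Let $F_\vartheta(t)=\vartheta([0,t])$. Then $K_{C_\vartheta}(x,[0,y])$ can be written as an explicit combination of values $F_\vartheta(1-x+y)$, $F_\vartheta((1-x)^-)$, $F_\vartheta(y-x)$ and $1$, and the same holds for $\vartheta_n$.

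Weak convergence $\vartheta_n\to\vartheta$ gives $F_{\vartheta_n}(t)\to F_\vartheta(t)$ at every continuity point $t$ of $F_\vartheta$, so at all but countably many $t$. Fix $x$ such that $1-x$ is a continuity point; this excludes only countably many $x$, a $\lambda$-null set. For such $x$ and all $y$ outside a countable set (namely those $y$ with $1-x+y$ or $y-x$ an atom location of $\vartheta$), we get $K_{C_{\vartheta_n}}(x,[0,y])\to K_{C_\vartheta}(x,[0,y])$.

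Since all integrands are bounded by $1$, dominated convergence on $[0,1]^2$ with respect to $\lambda_2$ gives $D_1(C_{\vartheta_n},C_\vartheta)\to0$. The exceptional set is contained in a countable union of lines and null sets, so it has $\lambda_2$-measure zero.

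The main obstacle is the careful bookkeeping of the modular wrap-around. One has to check that the boundary points of $r_x^{-1}([0,y])$ are exactly $1-x$, $1-x+y$ and $y-x$ (mod $1$), handling open versus closed endpoints and the identification of $0$ and $1$, which matters because $\vartheta$ may have an atom at $0$. The cleanest way is to observe that $\vartheta_n(I)\to\vartheta(I)$ for every interval $I$ whose endpoints are not atoms of $\vartheta$ (a $\vartheta$-continuity set, by the portmanteau theorem). Then one only needs to verify that the set of pairs $(x,y)$ for which some endpoint of $r_x^{-1}([0,y])$ is an atom of $\vartheta$ has $\lambda_2$-measure zero. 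For a fixed atom $t$, this set is a finite union of lines $x=1-t$, $y=x+t-1$ and $y=x+t$ (mod $1$), and there are only countably many atoms.
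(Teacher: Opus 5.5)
Your proof is correct and complete. The paper does not prove this lemma itself; it only refers to an external source, so there is no in-paper argument to match. The ingredients you give together form a self-contained proof:
\begin{itemize}
\item the explicit description of $r_x^{-1}([0,y])$ as $[1-x,1-x+y]$ for $y\le x$ and as $[0,y-x]\cup[1-x,1]$ for $y>x$;
\item the resulting convergence $K_{C_{\vartheta_n}}(x,[0,y])\to K_{C_\vartheta}(x,[0,y])$ outside a countable union of lines;
\item dominated convergence on $[0,1]^2$.
\end{itemize}
Working with the particular version $x\mapsto\vartheta^{r_x}$ of the kernel is harmless, since $D_1$ does not depend on the chosen version.

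The same conclusion can be reached without endpoint bookkeeping. For each $x$ with $\vartheta(\{1-x\})=0$, the map $r_x$ is continuous $\vartheta$-a.e.\ (its only discontinuity is at $t=1-x$). The continuous mapping theorem then gives $\vartheta_n^{r_x}\to\vartheta^{r_x}$ weakly, i.e.\ weak convergence of the conditional distributions for all but countably many $x$. The general mechanism behind (e)$\Rightarrow$(d) in Theorem~\ref{thm:wcc.evc}, which is valid for arbitrary copulas, finishes the argument: the conditional distribution functions converge at all but countably many $y$, and dominated convergence yields $D_1$-convergence. Your route is more elementary and makes the exceptional set fully explicit. The continuous-mapping route is shorter and transfers directly to other kernels of the form $\vartheta^{T_x}$ with $T_x$ piecewise continuous.

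One wording detail in your last paragraph: the portmanteau condition concerns the boundary of the interval relative to $[0,1]$, and $0$ and $1$ are never boundary points there. This is exactly why an atom at $0$ (e.g.\ $\vartheta=\delta_0$, giving $M$) causes no trouble. Your explicit list of boundary points $1-x$, $1-x+y$, $y-x$ already reflects this; only the phrase ``endpoints are not atoms'' should be read accordingly.
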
 

\begin{thm}\label{lem:main.sh}
$\cdb^{sh}$ is a compact, convex subset of $(\cdb,D_1)$ containing infinitely many 
linearly independent elements. Moreover, $(\cdb^{sh},D_1)$ is homeomorphic to the 
Hilbert cube $(\mathcal{H},\rho)$, is an AR, has the DC and the FP property, and is a 
$Z$-set in $(\cdb,D_1)$.
\end{thm}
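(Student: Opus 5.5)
Via the isometry $\kappa$ from the proof of Lemma \ref{lem:D1AR}, I will identify $(\cdb,D_1)$ with the convex set $E=\kappa(\cdb)\subseteq L_1([0,1]^2)$; $\kappa$ preserves convex combinations. It then suffices to show that $\kappa(\cdb^{sh})$ is a compact convex subset of $L_1([0,1]^2)$ of infinite inductive dimension, and to apply Theorem \ref{thm:keller}. Compactness, AR, DC and FP then follow exactly as in the Extreme Value case. Only the $Z$-set property needs a separate argument.

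\textbf{Convexity and compactness.} For $\vartheta_1,\vartheta_2\in\mathcal{P}([0,1])$ and $s\in[0,1]$ the kernels satisfy $(s\vartheta_1+(1-s)\vartheta_2)^{r_x}=s\vartheta_1^{r_x}+(1-s)\vartheta_2^{r_x}$. Hence $C_{s\vartheta_1+(1-s)\vartheta_2}=sC_{\vartheta_1}+(1-s)C_{\vartheta_2}$, the map $\beta:\vartheta\mapsto C_\vartheta$ is affine, and $\cdb^{sh}=\beta(\mathcal{P}([0,1]))$ is convex. Equip $\mathcal{P}([0,1])$ with the weak topology, in which it is compact by Prokhorov. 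Lemma \ref{lem:weak.to.sh} says $\beta$ is continuous into $(\cdb,D_1)$, so $\cdb^{sh}$ is compact as a continuous image.

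\textbf{Infinitely many linearly independent elements.} I will use $\beta$ restricted to Dirac measures, $\vartheta=\delta_a$ for $a\in[0,1)$. Then $C_{\delta_a}$ is the completely dependent copula with transformation $r_a$, and its measure is concentrated on the graph of $r_a$. For distinct $a$ these graphs are disjoint, so the doubly stochastic measures $\mu_{C_{\delta_a}}$ have pairwise disjoint supports. The argument of Lemma \ref{lem:pick} then gives linear independence of the measures, and hence of the functions $\kappa(C_{\delta_a})$. Since $\kappa(\cdb^{sh})$ is convex and contains $n+1$ affinely independent points for every $n$, it contains $n$-simplices, so $\operatorname{ind}=\infty$. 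Theorem \ref{thm:keller} then yields the homeomorphism with $\mathcal{H}$.

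\textbf{$Z$-set.} Closedness is immediate because $\cdb^{sh}$ is compact. As in the EVC proof, I take a continuous $\varepsilon$, put $\varepsilon'=\min(\varepsilon,1)$, and use
\[
\iota(C)=\left(1-\tfrac{\varepsilon'(C)}{2}\right)C+\tfrac{\varepsilon'(C)}{2}\,W.
\]
Continuity and $D_1(\iota(C),C)<\varepsilon'(C)$ were already shown there, so the only new step is $\iota(C)\notin\cdb^{sh}$.

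This is the step I expect to be the main obstacle, since $\cdb^{sh}$ contains singular copulas. I will test against the anti-diagonal $G=\{(x,1-x):x\in[0,1]\}$. For $C_\vartheta$, disintegration gives
\[
\mu_{C_\vartheta}(G)=\int_{[0,1]}\vartheta^{r_x}(\{1-x\})\,d\lambda(x).
\]
The integrand is positive only if $\vartheta$ has an atom at $r_x^{-1}(1-x)=(1-2x)\bmod 1$. Since $\vartheta$ has at most countably many atoms and each $t$ satisfies $(1-2x)\bmod1=t$ for at most two $x$, the integrand vanishes $\lambda$-a.e. Hence $\mu_{C_\vartheta}(G)=0$. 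On the other hand $\mu_{\iota(C)}(G)\ge\varepsilon'(C)/2>0$, so $\iota$ maps $\cdb$ into $\cdb\setminus\cdb^{sh}$, which completes the $Z$-set claim.
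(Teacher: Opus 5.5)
Your proposal is correct and follows essentially the same route as the paper. Both arguments use the same four steps:
\begin{enumerate}
\item convexity and compactness of $\cdb^{sh}$ as the affine, continuous image of the compact set $\mathcal{P}([0,1])$ via Lemma \ref{lem:weak.to.sh};
\item linear independence of the copulas $C_{\delta_a}$, using the disjoint graphs of the rotations $r_a$;
\item an application of Theorem \ref{thm:keller};
\item the map $\iota$ from the Extreme Value proof, tested against the anti-diagonal $G$, for the $Z$-set property.
\end{enumerate}
Your simplex argument for $\operatorname{ind}=\infty$ and your computation that $\mu_{C_\vartheta}(G)=0$ spell out steps the paper only asserts. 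One small miscount: for $t=0$ there are three solutions $x\in\{0,\tfrac12,1\}$, not at most two, which does not affect the almost-everywhere conclusion.
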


\begin{proof}
Convexity of $\cdb^{sh}$ is obvious - in fact, the mapping $\iota: \mathcal{P}([0,1]) \to \cdb^{sh}$, 
given by $\iota(\vartheta)=C_\vartheta$ preserves convex combinations. Furthermore, letting 
$m_W$ denote the Wasserstein metric (as metrization of weak convergence) on $\mathcal{P}([0,1])$, 
Lemma \ref{lem:weak.to.sh} yields continuity of $\iota$ w.r.t. the metrics $m_W$ and $D_1$. 
Considering that $(\mathcal{P}([0,1]),m_W)$ is a compact metric space (see, e.g., \cite{Bill,Kallenberg}) and that continuous images of compact metric spaces are compact too, it 
follows that $\cdb^{sh}$ is a compact, convex subset of $(\cdb,D_1)$. \\
Showing that $\cdb^{sh}$ contains infinitely many linearly independent elements (linearly independent in the Banach space $L_1([0,1]^2)$) is straightforward, since the family 
$\{C_{\delta_a}: a \in (0,1)\}$ is linearly independent. In fact, if for 
$0<a_1<a_2<\ldots< a_n <1$ there are constants $e_1,\ldots,e_n \in \mathbb{R}$ such that 
$\sum_{i=1}^n e_i K_{C_{\delta_{a_i}}}(u,[0,v])=0$ for $\lambda$-almost every $(u,v) \in [0,1]^2$, 
integrating over $[0,x] \times [0,y]$ yields that 
$$
\sum_{i=1}^n e_i C_{\delta_{a_i}}(x,y)=0
$$
holds for every $(x,y) \in [0,1]^2$. Translating to the corresponding doubly stochastic measures, 
it follows that the finite signed measure $\nu$, given by 
$$
\nu(G):= \sum_{i=1}^n e_i \, \mu_{C_{\delta_{a_i}}}(G),\quad G \in \mathcal{B}([0,1]^2)
$$  
is the zero measure, i.e., $\nu(G)=0$ holds for every $G \in \mathcal{B}([0,1]^2)$.  
Considering the special case of 
$G=\Gamma(r_{a_1}):=\{(x,r_{a_1}(x)): \, x \in [0,1]\} \in \mathcal{B}([0,1]^2)$, we have
\begin{align*}
0=\nu(\Gamma(r_{a_1}))= \sum_{i=1}^n e_i \, \mu_{C_{\delta_{a_i}}}(\Gamma(r_{a_1})) = 
e_1 \mu_{C_{\delta_{a_1}}}(\Gamma(r_{a_1})) = e_1. 
\end{align*}   
Proceeding analogously eventually yields $e_1=e_2=\ldots=e_n=0$, implying that the family 
$\{C_{\delta_a}: a \in (0,1)\}$ is linearly independent. As direct consequence we 
obtain that $\operatorname{ind}\cdb^{sh}=\infty$. \\
Summing up, we have shown that $\cdb^{sh}$ (or, more precisely, $\kappa(\cdb^{sh})$) is a 
convex compact subset of the Banach space $L_1([0,1]^2)$, which has small inductive dimension infinity.
Applying Theorem \ref{thm:keller} therefore yields that $(\cdb^{sh},D_1)$ is homeomorphic to the 
Hilbert cube $(\mathcal{H},\rho)$, which in turn implies that it is an AR, has the DC and the 
FP property.\\
It remains to show that $\cdb^{sh}$ is a $Z$-set in $(\cdb,D_1)$. Reusing $\iota$ as defined in 
eq. \eqref{eq:iota}, however, no additional effort is needed, since $\iota$ is easily verified
to map $\cdb$ into $\cdb \setminus \cdb^{sh}$ (no spatially homogeneous copula assign positive mass
to the set $G:=\{(x,1-x): x \in [0,1]\}$).    
\end{proof}

\noindent \textbf{Acknowledgement:}
The second author gratefully acknowledge the support of the project `IDA Lab Salzburg'
(20102/F2300464-KZP; 20204-WISS/225/348/3-2023).


\end{document}